\newtheorem*{mainthm}{Main Theorem}
\newtheorem{thm}{Theorem}[section]
\newtheorem{lem}[thm]{Lemma}
\newtheorem{cor}[thm]{Corollary}
\newtheorem{prop}[thm]{Proposition}
\newtheorem{fct}[thm]{Fact}
\newtheorem{qu}[thm]{Question}
\theoremstyle{remark}
\newtheorem{rem}[thm]{Remark}
\theoremstyle{definition}
\newtheorem{dfn}[thm]{Definition}
\newtheorem{ex}[thm]{Example}
\newtheorem*{clm*}{Claim}
\newenvironment{clmproof}[1][\proofname]{\proof[#1]}{\endproof}
\DeclareMathOperator{\Stab}{{Stab}}
\DeclareMathOperator{\tp}{{tp}}
\DeclareMathOperator{\Aut}{{Aut}}
\DeclareMathOperator{\Gal}{{Gal}}
\DeclareMathOperator{\SO}{{SO}}
\DeclareMathOperator{\GL}{{GL}}
\newcommand{\fC}{\mathfrak C}
\newcommand{\Autf}{\operatorname{Aut\,f}}
\newcommand{\restr}{\mathord{\upharpoonright}}
\newcommand{\Er}{\mathrel{E}}
\newcommand{\Rr}{\mathrel{R}}
\let\Gamma\varGamma
\let\Delta\varDelta
\let\Theta\varTheta
\let\Lambda\varLambda
\let\Xi\varXi
\let\Pi\varPi
\let\Sigma\varSigma
\let\Upsilon\varUpsilon
\let\Phi\varPhi
\let\Psi\varPsi
\let\Omega\varOmega
\let\phi\varphi
\newcommand{\xqed}[1]{%
	\leavevmode\unskip\penalty9999 \hbox{}\nobreak\hfill
	\quad\hbox{\ensuremath{#1}}}
\begin{document}
	
	
	\author{Tomasz Rzepecki}
	\email{tomasz.rzepecki@math.uni.wroc.pl}
	\address{
		Instytut Matematyczny, Uniwersytet Wrocławski\\
		pl. Grunwaldzki 2/4\\
		50-384 Wrocław, Poland
	}
	\thanks{The author is supported by NCN grant 2015/17/N/ST1/02322}

	\keywords{bounded invariant equivalence relations, Borel cardinality, transformation groups, equivalence relations, compact groups}
	\subjclass[2010]{03C45; 03E15; 54H15; 22C05}
	
	\title{Equivalence relations invariant under group actions}
	
	\begin{abstract}
		We extend some recent results about bounded invariant equivalence relations and invariant subgroups of definable groups: we show that type-definability and smoothness are equivalent conditions in a wider class of relations than heretofore considered, which includes all the cases for which the equivalence was proved before.
		
		As a by-product, we show some analogous results in purely topological context (without direct use of model theory).
	\end{abstract}
	
	\maketitle
	\section{Introduction}
	The so-called strong types (i.e.\ bounded invariant equivalence relations finer than equivalence over $\emptyset$) and the related concept of connected group components play a very important role in modern model theory. They appear in many influential theorems and conjectures, e.g.\ Pillay's conjecture for groups in o-minimal theories, the independence theorem in simple theories, as well as much of stability theory.
	
	Among these, the type-definable strong types and connected components are particularly well-understood, as they have very tractable topological nature. On the other hand, the non-type-definable ones are much less tame. Indeed, when we try to associate them with topological objects in an analogous manner, we may obtain trivial topologies.
	
	In this paper, we try to make more precise the boundary between well- and ill-behaviour. In \cite{CLPZ01}, the authors suggested that the quotients by the Lascar strong type (which may not be type-definable) can be understood via descriptive set theory. This was put in concrete form in a conjecture by the authors of \cite{KPS13}, later proved in \cite{KMS14}.
	\begin{fct}[{Essentially \cite[Conjecture 1]{KPS13}, \cite[Main Theorem A]{KMS14}}]
		\label{fct:smt_KMS}
		The Lascar strong type $\equiv_L$ on a given type-definable set is type-definable (as an equivalence relation) if and only if it is smooth.\xqed{\lozenge}
	\end{fct}
	(The definition of smoothness will be given in Section~\ref{sec:prelims}.)

	Later work  published in \cite{KM14} and independently in \cite{KR16} yields an analogous fact for $F_\sigma$ orbital equivalence relations (which include $\equiv_L$), under the additional assumption that the domain is the set of realisations of a single complete $\emptyset$-type. Finally, in \cite{KPR15}, the same fact was proven for arbitrary equivalence relations defined on a single complete $\emptyset$-type.
	\begin{fct}[see Fact~\ref{fct:KPR_main} for the precise statement]
		\label{fct:smt_KPR_rough}
		Suppose $p\in S(\emptyset)$ is a type in countably many variables, while $E$ is a  strong type on $p(\fC)$. Then $E$ is type-definable if and only if it is smooth.\xqed{\lozenge}
	\end{fct}
	
	Very roughly speaking, the bulk of the proof of both Facts~\ref{fct:smt_KMS} and \ref{fct:smt_KPR_rough} is to prove that smoothness implies that each class is type-definable (in case of Fact~\ref{fct:smt_KMS}, by careful analysis of the diameters related to the Lascar strong type using descriptive set theoretical methods, and in case of Fact~\ref{fct:smt_KPR_rough}, by using topological dynamical methods, introduced in \cite{KP14}). Once we have that, we need only to apply the following fact.
	\begin{fct}
		\label{fct:KPR_rem}
		Suppose either:
		\begin{enumerate}[label=\alph*)]
			\item
			$E={\equiv_L}$, or
			\item
			$E$ is a strong type defined on $p(\fC)$ for some $p\in S(\emptyset)$.
		\end{enumerate}
		Then if all classes of $E$ are type-definable, then $E$ is type-definable.
	\end{fct}
	\begin{proof}
		If $E={\equiv_L}$, it follows easily from \cite[Corollary 1.8]{Ne03}. If $E$ is defined on $p(\fC)$ for a complete $p$, then it is the content of \cite[Remark 1.11]{KPR15}.
	\end{proof}
	In this paper, the goal is to find a common generalisation of Facts~\ref{fct:smt_KMS},~\ref{fct:smt_KPR_rough} by finding and proving a common generalisation of the two variants of Fact~\ref{fct:KPR_rem}, and then using that in conjunction with Fact~\ref{fct:smt_KPR_rough}. To that end, in Section~\ref{sec:general}, we reintroduce the notion of orbital equivalence relation from \cite{KR16} (in an abstract setting), generalise it to the notion of a weakly orbital equivalence relation, and we prove a broad generalisation of Fact~\ref{fct:KPR_rem} in in the form of Theorem~\ref{thm:worb_aut}  (in case of $\equiv_L$, circumventing the use of \cite{Ne03}). This allows us to deduce the main theorem of this paper.
	\begin{mainthm}[Simplified form of Corollary~\ref{cor:smt_aut}]
		Suppose $T$ is a countable theory and $X$ is a type-definable set (in a countable product of sorts).
		
		Suppose $E$ is a strong type on $X$ which is weakly orbital by type-definable (which includes orbital strong types like $\equiv_L$, and -- if $X=p(\fC)$ for some $p\in S(\emptyset)$ -- all strong types on $X$). Then $E$ is smooth if and only if it is type-definable.\xqed{\lozenge}
	\end{mainthm}

	By very similar reasoning, in Section~\ref{sec:def} we obtain analogous results for definable group actions (in parallel to variants of results in \cite{KMS14}, \cite{KM14}, \cite{KR16} and \cite{KPR15} for definable group components) and, in Section~\ref{sec:cpct}, for continuous actions of compact groups.	
	
	\section{Preliminaries}
	\label{sec:prelims}

	\subsection{Basic facts from model theory}
	\label{ssec:mt_prelim}
	Here we recall very briefly some basic facts and definitions which will be applied in the model-theoretical parts of this paper. This is not comprehensive, and is only meant to remind the most important notions; for more in-depth explanation, see e.g.\ \cite{TZ12} for the elementary and \cite{CLPZ01} for the more advanced topics.

	\begin{dfn}
		Some of the basic definitions and conventions we will use are the following.
		\begin{itemize}
			\item
			We fix a complete first-order theory $T$ with infinite models.
			\item
			By $\fC$ we will denote a monster model of $T$, i.e.\ a model which is $\kappa$-saturated and $\kappa$-strongly homogeneous for a sufficiently large (strong limit) cardinal $\kappa$ (or simply a saturated model of cardinality $\kappa$, for a sufficiently large strongly inaccessible cardinal $\kappa$, if such $\kappa$ exists).
			\item
			We say that something is \emph{small} or \emph{bounded} if it is smaller than the $\kappa$ from the definition of the monster model.
			\item
			We say that a set is $A-$\emph{type-definable} (or \emph{type-definable over $A$}) if it is the intersection of a small number sets, each definable over $A$ in a (fixed) small product of sorts of $\fC$. When $A$ is irrelevant, we simply say that the set is type-definable.
			\item
			$\equiv$ is the relation of having the same type over $\emptyset$.
			\item
			We say that a set (a subset of a small product of sorts of $\fC$) is \emph{invariant} if it is setwise invariant under the standard action of $\Aut(\fC)$ or, equivalently, if it is $\equiv$-invariant. When $A$ is a small set, we say that a set is \emph{invariant over $A$} if it is invariant under the group of automorphisms which fix $A$ pointwise.\xqed{\lozenge}
		\end{itemize}
	\end{dfn}
	
	\begin{dfn}
		Here we list the most relevant definitions related to equivalence relations and connected components in model theory:
		\begin{itemize}
			\item
			We say that an equivalence relation is \emph{bounded} if it has a small number of classes.
			\item
			On every product of sorts of $\fC$, we have $\equiv_{KP}$ and $\equiv_L$ (\emph{Kim-Pillay} and \emph{Lascar equivalence}), which are the finest bounded and, respectively, $\emptyset$-type-definable and invariant equivalence relations. Their classes are called \emph{Kim-Pillay} and \emph{Lascar strong types} (respectively).
			\item
			If $G$ is a type-definable group (i.e.\ $G$ is type-definable and the graph of its group operation is type-definable) while $A$ is a small set, then we have the \emph{connected components} $G^{00}_A$ and $G^{000}_A$, which are the smallest $A$-type-definable and invariant over $A$ (respectively) subgroups of small index in $G$.
			\item
			If $E$ is a bounded invariant (over a small set) equivalence relation on an invariant set $X$, then we have the so-called \emph{logic topology} on $X/E$, where the closed sets are precisely those which have type-definable preimages under the quotient map $X\to X/E$.
			\item
			There are groups $\Autf_{KP}(\fC)$ and $\Autf_{L}(\fC)$ called (respectively) \emph{Kim-Pillay} and \emph{Lascar strong automorphism groups}. They are normal subgroups of $\Aut(\fC)$ such that $\equiv_{KP}$- and $\equiv_L$-classes are exactly the orbits of the appropriate strong automorphism groups.
			\item
			The quotient $\Aut(\fC)/\Autf_L(\fC)$ is called the \emph{Galois group} of the theory $T$ and denoted by $\Gal(T)$. It does not depend on the choice of the monster model.\xqed{\lozenge}
		\end{itemize}
	\end{dfn}
	
	\begin{fct}
		\label{fct:basic_mt_obs}
		Some basic facts and observations:
		\begin{enumerate}
			\item
			\label{it:fct:basic_mt_obs:inv_tdf}
			A type-definable and $\equiv_L$-invariant set is $\equiv_{KP}$-invariant. (see e.g.\ \cite[Corollary 1.13]{KPR15})
			\item
			\label{it:fct:basic_mt_obs:las_model}
			If $M\preceq \fC$ is any submodel, then having the same type over $M$ implies having the same Lascar strong type (i.e.\ being $\equiv_L$-equivalent).
			\item
			\label{it:fct:basic_mt_obs:Gal_model}
			If $m$ is a tuple (infinite) enumerating a small submodel $M\preceq \fC$, then the map $\Aut(\fC)\to [m]_{\equiv}/{\equiv_L}$ defined by $\sigma\mapsto [\sigma(m)]_{\equiv_L}$ factors through $\Gal(T)$, and in fact the induced map $\Gal(T)\to [m]_{\equiv}/{\equiv_L}$ is a bijection. By identifying (via this map) $\Gal(T)$ with $[m]_{\equiv}/{\equiv_L}$, we obtain the logic topology on $\Gal(T)$, which does not depend on the choice of $m$.
			\item
			$\Gal(T)$ is a (possibly non-Hausdorff) compact topological group when endowed with the logic topology.
			\item
			The closure of the identity in $\Gal(T)$ is $\Autf_{KP}(\fC)/\Autf_L(\fC)$. It is sometimes denoted by $\Gal_0(T)$.
			\item
			\label{it:fct:basic_mt_obs:T_2}
			If $X$ is a type-definable set and $E$ is a bounded invariant equivalence relation on $X$, then $X/E$ is a Hausdorff space (with the logic topology) if and only if $E$ is type-definable.\xqed{\lozenge}
		\end{enumerate}
	\end{fct}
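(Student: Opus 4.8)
The statement collects several standard facts; items (1)--(5) are quoted from the literature (and so need only be cited, not reproved), so I would sketch only the last, self-contained item: that $X/E$ is Hausdorff if and only if $E$ is type-definable. Throughout, recall that the logic topology on $X/E$ is precisely the quotient, via the projection $\pi\colon X\to X/E$, of the topology $\tau$ on $X$ whose closed sets are the type-definable subsets of $X$; by $\kappa$-saturation of $\mon$ every family of type-definable sets with the finite intersection property has nonempty intersection, so $(X,\tau)$ --- and hence $X/E$ --- is quasi-compact. The plan is to route both implications through the single equivalence ``$X/E$ is Hausdorff $\iff$ the diagonal $\Delta\subseteq (X/E)\times(X/E)$ is closed'', valid for the product topology on any space, together with the observation that $E=\rho^{-1}(\Delta)$, where $\rho=\pi\times\pi\colon X\times X\to (X/E)\times(X/E)$.

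For the direction ``Hausdorff $\Rightarrow$ type-definable'' --- the easy one --- Hausdorffness gives that $\Delta$ is closed, and $\rho$ is continuous, so it suffices to check that $\rho^{-1}$ sends product-closed sets to type-definable sets. The subbasic closed sets of the product are $C\times (X/E)$ and $(X/E)\times C$ with $C\subseteq X/E$ closed; their $\rho$-preimages are $\pi^{-1}(C)\times X$ and $X\times\pi^{-1}(C)$, which are type-definable since $\pi^{-1}(C)$ is. As type-definable subsets of $X\times X$ are closed under finite unions and arbitrary intersections, and every product-closed set is an intersection of finite unions of subbasic ones, $E=\rho^{-1}(\Delta)$ is type-definable.

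The substantive direction is ``type-definable $\Rightarrow$ Hausdorff'', and here the main obstacle is that $\rho$ is not obviously a quotient map onto the product, so $\Delta$ being closed cannot be read off directly from $\rho^{-1}(\Delta)=E$ being type-definable. I would remove this obstacle by showing $\pi$ is proper in the sense of Fact~\ref{fct:proper}. First, $\pi$ is closed: for type-definable $D$, the set $\pi^{-1}(\pi(D))$ is the $E$-saturation $\{x: (\exists x')\, x\Er x'\wedge x'\in D\}$, a projection of the type-definable set $E\cap (X\times D)$ (type-definable here because $E$ is), and projections of type-definable sets are type-definable by compactness; hence $\pi(D)$ is closed. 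Second, the fibres of $\pi$ are the $E$-classes, each type-definable and therefore $\tau$-quasi-compact; thus $\pi$ is closed with compact fibres, i.e.\ proper. Properness yields that $\pi\times\id$ is closed, whence $\rho=(\pi\times\id)\circ(\id\times\pi)$ is a closed continuous surjection and therefore a quotient map onto the product. Consequently $\Delta$ is closed exactly when $\rho^{-1}(\Delta)=E$ is type-definable, so the assumed type-definability of $E$ gives that $X/E$ is Hausdorff.
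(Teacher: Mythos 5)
Your proof of the substantive direction (type-definable $\Rightarrow$ Hausdorff) rests on a false foundation: the type-definable subsets of $X$ are \emph{not} the closed sets of a quasi-compact topology on $X$, and $\kappa$-saturation does \emph{not} apply to arbitrary families of type-definable sets -- only to families whose total set of parameters is small. Concretely, the sets $\{x\in X\mid x\neq a\}$ for $a\in X$ are all type-definable, have the finite intersection property, and have empty intersection; worse, \emph{every} subset $S\subseteq X$ equals $\bigcap_{a\notin S}\{x\in X\mid x\neq a\}$, so the topology generated by type-definable sets as closed sets is the discrete topology on $X$. Hence $(X,\tau)$ is not quasi-compact, the $E$-classes are not $\tau$-compact, Fact~\ref{fct:proper} cannot be invoked, and the logic topology on $X/E$ is not the quotient of $\tau$. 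Indeed your argument proves too much: in the discrete topology every subset of $X\times X$ is closed, so the claimed quotient map $\rho$ would make the diagonal closed for \emph{every} bounded invariant $E$, i.e.\ $X/E$ would always be Hausdorff -- contradicting the existence of bounded invariant relations that are not type-definable (e.g.\ $\equiv_L$ in theories where it differs from $\equiv_{KP}$). A telling symptom is that your argument never uses boundedness of $E$, which is essential here.

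The repair is to move the compactness to where it actually lives: the space $X_M$ of types over a fixed small model $M\preceq\mon$, which is an honest compact Hausdorff space. Boundedness of $E$ together with item \eqref{it:fct:basic_mt_obs:las_model} of the Fact shows that $\tp(x/M)$ determines $[x]_E$, that $E$ induces a closed equivalence relation on $X_M$, and that every $E$-saturated type-definable set is type-definable over $M$, so the logic topology on $X/E$ is the quotient topology from $X_M$. With $X_M$ in place of $(X,\tau)$ your properness skeleton (closed map with compact fibres $\Rightarrow$ product of quotient maps is a quotient map $\Rightarrow$ diagonal closed) goes through and is essentially the standard folklore proof that the paper cites without proof. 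For the converse direction -- the only one the paper argues explicitly -- your diagonal-preimage idea matches the paper's, but your claim that type-definable sets are closed under \emph{arbitrary} intersections is false for the same reason as above; the paper instead compares the product of the logic topologies on $X/E$ with the (a priori finer) logic topology on $X^2/(E\times E)$, and the smallness needed there again comes from boundedness of $E$ (together with $\kappa$ being strong limit, or by working over the fixed model $M$).
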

	With regard to \eqref{it:fct:basic_mt_obs:T_2}, the fact that type-definable relations have Hausdorff quotients is folklore, while the converse is easy to see, as $E$ is the preimage of the diagonal of $X/E$, which -- provided $X/E$ is Hausdorff -- is a closed subset of $(X/E)^2$ (in the product of logic topologies on $X/E$, and hence in the -- a priori stronger -- logic topology on $X^2/(E\times E)$).

	\subsection{Descriptive set theory}
	\label{ssec:dst_intro}
	To formulate the main results, we will need the classical notion of smoothness from descriptive set theory.
	
	\begin{dfn}
		\label{dfn:smt}
		Suppose that $E$ is an equivalence relation on a Polish space $X$. We say that $E$ is \emph{smooth} if there is a Borel function $f\colon X\to {\bf R}$ such that $x_1\Er x_2$ if and only if $f(x_1)=f(x_2)$.
		
		Equivalently, $E$ is smooth if it admits a countable Borel separating family, i.e.\ there is a family $(B_n)_{n\in {\bf N}}$ of Borel sets such that $x_1 \Er x_2$ if and only if for all $n$ we have $x_1\in B_n\iff x_2\in B_n$. \xqed{\lozenge}
	\end{dfn}
	Smooth equivalence relations are the well-behaved, tame ones. This includes all closed equivalence relations (as well as $G_\delta$).
	\begin{fct}[{\cite[Theorem 3.4.3]{BK96}}]
		\label{fct:clsd_smth}
		Every closed equivalence relation on a Polish space is smooth.\xqed{\lozenge}
	\end{fct}
	
	The reader may consult \cite{BK96} for more in-depth discussion about group actions from the descriptive-set-theoretic perspective, including smoothness.

	\subsection{More recent facts from model theory}
	\label{ssec:previous_results}
	In this subsection, we recall those of the more recent concepts mentioned in the introduction which will be used in the formulations and proofs of the main results (including Fact~\ref{fct:KPR_main}, which we seek to strengthen in this paper). For a more detailed exposition, one may consult one or more of \cite{KPS13}, \cite{KR16}, \cite{KMS14}, \cite{KM14}, or \cite{KPR15}, particularly the preliminary sections.
	
	Let us first define what smoothness means in the model-theoretic context.
	\begin{dfn}
		If $X$ is an invariant set, then we say that it is \emph{countably supported} if it is a subset of a countable product of sorts of $\fC$.\xqed{\lozenge}
	\end{dfn}
	
	\begin{dfn}
		Suppose that the theory is countable. Let $M$ be a countable model, and let $E$ be a countably supported, bounded equivalence relation, invariant over $M$, on an $M$-type-definable set $X$. Denote by $X_M$ the space of types over $M$ of elements of $X$.
		
		Then $X_M$ is a compact Polish space and we have an equivalence relation $E^M$ on $X_M$, the pushforward of $E$ via $x\mapsto \tp(x/M)$ (i.e.\ $\tp(x_1/M)\Er^M \tp(x_2/M)$ if and only if $x_1 \Er x_2$). (This is well-defined essentially by Fact~\ref{fct:basic_mt_obs} \eqref{it:fct:basic_mt_obs:las_model}.)\xqed{\lozenge}
	\end{dfn}
	
	\begin{dfn}
		\label{dfn:smt_modelth}
		We say that $E$ is \emph{smooth} if $E^M$ is smooth, as per Definition~\ref{dfn:smt} (this does not depend on the choice of the countable model $M$). \xqed{\lozenge}
	\end{dfn}
	
	\begin{fct}
		\label{fct:tdf_smt}
		As an immediate corollary of Fact~\ref{fct:clsd_smth}, in a countable theory, a type-definable, countably supported, bounded equivalence relation is smooth.\xqed{\lozenge}
	\end{fct}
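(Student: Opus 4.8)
The statement is an immediate consequence of Fact~\ref{fct:clsd_smth} once we know that $E^M$ is a \emph{closed} equivalence relation on the compact Polish space $X_M$; thus the entire task reduces to proving that $E^M$ is closed as a subset of $X_M\times X_M$. (That $E^M$ is well-defined, and that $X_M$ is compact Polish, are already recorded in the setup, so I take them for granted.)

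The plan is to realise $E^M$ as the continuous image of a manifestly closed set. First I would upgrade type-definability to type-definability over $M$: since $E$ is type-definable and invariant over $M$, it is type-definable over $M$ (a standard fact about $\Aut(\mon/M)$-invariant type-definable sets), say $E=\bigcap_{i}[\phi_i]$ with each $\phi_i(x,y)$ a formula over $M$. Working in the compact Polish space $(X\times X)_M$ of types over $M$ of pairs of elements of $X$, consider $\hat E:=\{\tp(ab/M):a\mathrel E b\}$. Because $E$ is type-definable over $M$, we have $\hat E=\bigcap_i\{p\in (X\times X)_M:\phi_i\in p\}$, an intersection of basic clopen sets; hence $\hat E$ is closed.

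The decisive step is to connect $\hat E$ with $E^M$ through the restriction map $\pi\colon (X\times X)_M\to X_M\times X_M$ given by $\tp(ab/M)\mapsto(\tp(a/M),\tp(b/M))$. This map is continuous, and since both spaces are compact Hausdorff, $\pi$ is automatically a closed map. By the very definitions, $\pi(\hat E)=E^M$, so $E^M$ is closed. Applying Fact~\ref{fct:clsd_smth} to the closed equivalence relation $E^M$ on the Polish space $X_M$ shows that $E^M$ is smooth, which is exactly the assertion that $E$ is smooth.

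The one genuinely nontrivial point — and the reason for routing the argument through the space of $2$-types rather than arguing directly in $X_M\times X_M$ — is the closedness of $E^M$. It cannot be read off the separating formulas $\phi_i$ in a naive way: each $\phi_i(x,y)$ constrains the $2$-type $\tp(ab/M)$, whereas nearness in the product $X_M\times X_M$ only controls the two $1$-types $\tp(a/M)$ and $\tp(b/M)$ separately. Compactness of $(X\times X)_M$ — equivalently, the fact that the continuous restriction map $\pi$ is closed — is precisely what closes this gap; everything else is bookkeeping.
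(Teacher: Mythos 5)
Your proof is correct and follows exactly the route the paper intends: the paper states this as an immediate corollary of Fact~\ref{fct:clsd_smth}, the only content being that type-definability (upgraded to type-definability over $M$ by $M$-invariance) makes $E^M$ a closed subset of $X_M\times X_M$, which you verify by the standard argument of pushing the closed set of $2$-types forward under the continuous (hence closed, by compactness) restriction map to $X_M\times X_M$. Nothing is missing; your identification $\pi(\hat E)=E^M$ is exactly the well-definedness of $E^M$ already recorded in the paper's setup.
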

	
	With these definitions, we can formulate the full statement of the main result of \cite{KPR15} (which we use to prove its generalisation in Corollary~\ref{cor:smt_aut}, i.e.\ the Main Theorem).
	\begin{fct}[{\cite[Theorem 5.1]{KPR15}}]
		\label{fct:KPR_main}
		We are working in a monster model $\fC$ of a complete, countable theory. Suppose we have:
		\begin{itemize}[nosep]
			\item
			a $\emptyset$-type-definable, countably supported set $X$,
			\item
			a bounded, invariant equivalence relation $E$ on $X$,
			\item
			a type-definable and $E$-saturated set $Y\subseteq X$.
		\end{itemize}
		Then, for every type $p \in X_{\emptyset}$, either $E\restr_{Y\cap p(\fC)}$ is type-definable [in which case -- provided $Y\cap p(\fC)\neq \emptyset$ -- $E\restr_{p(\fC)}$ is type-definable as well, by Fact~\ref{fct:KPR_rem}], or $E\restr_{Y\cap p(\fC)}$ is not smooth.\xqed{\lozenge}
	\end{fct}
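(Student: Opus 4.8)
The plan is to establish the contrapositive, per type: fix $p\in X_\emptyset$, discard the vacuous case $Y\cap p(\mon)=\emptyset$, assume $E\restr_{Y\cap p(\mon)}$ is \emph{not} type-definable, and deduce that it is not smooth. The first move is to convert the relation into a subgroup of $\Aut(\mon)$, exploiting that $\Aut(\mon)$ acts transitively on the single complete type $p(\mon)$. Fix $a\models p$ with $a\in Y$ and set $H:=\{\sigma\in\Aut(\mon):\sigma(a)\Er a\}$. Since $E$ is an $\Aut(\mon)$-invariant equivalence relation, $H$ is a subgroup, $\sigma(a)\Er\tau(a)$ holds iff $\tau^{-1}\sigma\in H$, and hence the $E$-class of $\sigma(a)$ inside $p(\mon)$ is exactly $\sigma H\cdot a$. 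Moreover $\Autf_L(\mon)\leq H$, because a bounded invariant $E$ is coarser than $\equiv_L$ (the finest such relation), so $a\Laseq b$ implies $a\Er b$. As $Y$ is $E$-saturated, $Y\cap p(\mon)$ is a union of full $E$-classes, so it equals $W\cdot a$ for $W:=\{\sigma:\sigma(a)\in Y\}$, a union of left cosets of $H$; under this dictionary $E\restr_{Y\cap p(\mon)}$ becomes the coset equivalence relation of $H$ on $W$.

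The second step transfers this group picture to an honest compact Polish group, which is where descriptive set theory and topological dynamics enter. Fixing a countable model $M$ (enumerated by $\bar m$), the type space $X_M$ is compact Polish and carries $E^M$, and by Fact~\ref{fct:basic_mt_obs}\eqref{it:fct:basic_mt_obs:las_model} the map $\sigma\mapsto\tp(\sigma(a)/M)$ is compatible with $\equiv_L$ (so that $E^M$ is well defined). I would then, following the source, use the Ellis semigroup of the flow $\Aut(\mon)\curvearrowright S_{\bar m}(\mon)$ and its Ellis group (with the $\tau$-topology) to manufacture a compact \emph{Hausdorff}, metrizable group $\mathcal G$ together with the image $\bar H\leq\mathcal G$ of $H$, so that the coset equivalence relation of $\bar H$ presents $E^M$ on the Borel piece determined by $W$. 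Under this identification, type-definability of $E\restr_{Y\cap p(\mon)}$ corresponds (via the logic-topology framework of Fact~\ref{fct:basic_mt_obs}\eqref{it:fct:basic_mt_obs:Gal_model} and the Hausdorff-quotient criterion \eqref{it:fct:basic_mt_obs:T_2}) to $\bar H$ being closed in $\mathcal G$, while smoothness of $E$ corresponds, via Definition~\ref{dfn:smt}, to the cosets of $\bar H$ admitting a countable Borel separating family.

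With everything now living in a compact Polish group, the conclusion is a direct application of Miller's theorem. The group $\mathcal G$ is totally nonmeager (its closed subsets are compact) and its Borel sets are strictly Baire, so Fact~\ref{fct:miller_thm} says that a countable Borel separating family for the cosets of $\bar H$ would force $\bar H$ to be closed. Contrapositively, our assumption that $E\restr_{Y\cap p(\mon)}$ is not type-definable makes $\bar H$ non-closed, so no such family exists and $E$ is not smooth, which is exactly the dichotomy. In the complementary (type-definable) alternative, the bracketed refinement is immediate from Fact~\ref{fct:KPR_rem}: when $Y\cap p(\mon)\neq\emptyset$, $E$-saturation of $Y$ makes a full $E$-class inside $p(\mon)$ coincide with a class of $E\restr_{Y\cap p(\mon)}$, so $E\restr_{p(\mon)}$ has a type-definable class and is therefore type-definable.

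I expect the second step to be the real obstacle, and the difficulty is concentrated in the regime where $H$ lies below $\Autf_{KP}(\mon)$ --- that is, where $E$ is finer than $\equiv_{KP}$ and the logic topology on $\Gal(T)$ is genuinely non-Hausdorff near the identity (its closure of the identity being $\Gal_0(T)=\Autf_{KP}(\mon)/\Autf_L(\mon)$). Passing to the Hausdorff quotient $\Gal(T)/\Gal_0(T)$ would discard precisely this information, so one cannot avoid building the finer compact Hausdorff group supplied by the Ellis-group/$\tau$-topology machinery; simultaneously one must keep track of the auxiliary set $W$ (equivalently $Y$) so that the subgroup-coset dichotomy really descends to the Borel piece governing $E\restr_{Y\cap p(\mon)}$. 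The reduction of step one and the Miller argument of step three are, by contrast, a formal computation and a citation.
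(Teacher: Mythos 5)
The first thing to note is that this statement has no proof in the paper at all: it is stated as a \emph{Fact}, imported verbatim from \cite[Theorem 5.1]{KPR15}, and used later as a black box (e.g.\ in the proof of Corollary~\ref{cor:smt_aut}). So there is no ``paper's own proof'' to compare your attempt against; it can only be measured against the original argument in \cite{KPR15}.

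Measured that way, your proposal reproduces the correct architecture but not the substance, and the gap is concentrated exactly where you suspect. Your step one is correct and standard: replacing $E\restr_{p(\mon)}$ by the coset relation of $H=\{\sigma\in\Aut(\mon):\sigma(a)\Er a\}$, noting $\Autf_L(\mon)\leq H$ (since $\equiv_L$ is the finest bounded invariant equivalence relation) and that $E$-saturation makes $Y\cap p(\mon)$ correspond to a union $W$ of left cosets of $H$; the bracketed refinement via Fact~\ref{fct:KPR_rem} is also handled correctly. The problem is step two. As a self-contained argument it is circular --- you defer the construction to ``the source'', i.e.\ to the very theorem being proved --- and the one concrete claim you make about it, namely that the Ellis-group machinery yields a compact Hausdorff \emph{metrizable} group $\mathcal G$ so that Fact~\ref{fct:miller_thm} applies ``directly'', is unjustified and in general false. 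The Ellis semigroup of the relevant flow sits inside a product space $S^{S}$, and the Ellis group with its $\tau$-topology (even after quotienting to make it Hausdorff) is typically non-metrizable, countable language notwithstanding; Ellis semigroups are metrizable only for rather special (tame) flows. Coping with exactly this non-metrizability --- presenting the relevant quotient through the compact Polish space of types over a countable model and proving a suitable relative form of the ``closed or non-smooth'' dichotomy, rather than invoking Miller's theorem for a compact Polish group off the shelf --- is the actual mathematical content of the proof in \cite{KPR15}. (By contrast, your worry about tracking $W$ is manageable once the right setting exists: $Y$ type-definable makes the corresponding set closed, hence it contains the closure of the subgroup, and one can run the dichotomy inside that closure.) So the proposal correctly locates the difficulty but neither resolves it nor describes a resolution that would work as stated.
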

	
	The theorem also has a counterpart for definable groups (as did the main results of \cite{KR16,KM14}), which we generalise in Corollary~\ref{cor:smt_def}.
	\begin{fct}[{\cite[Corollary 5.7]{KPR15}}]
		\label{fct:KPR_main_group}
		Assume the language is countable.
		Suppose that $G$ is a definable group and $H\leq G$ subgroup of bounded index, invariant over a countable set $A$. Suppose in addition that $K\geq H$ is a type-definable subgroup of $G$. Then $E_H\restr_{K}$ is smooth if and only if $H$ is type-definable (where $E_H$ is the relation on $G$ of lying in the same left coset of $H$.)\xqed{\lozenge}
	\end{fct}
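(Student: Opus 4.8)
The plan is to reduce to the main per-type statement, Fact~\ref{fct:KPR_main}, and treat the two implications separately. Throughout I name the countable parameter set $A$ by constants, which keeps the language countable and lets me regard $H$, $K$ and $E_H$ as invariant and type-definable over $\emptyset$; note also that $G$, being definable, lives in finitely many sorts and is therefore $\emptyset$-type-definable and countably supported, that $e\in H\leq K$, and that $E_H$ is bounded since $[G:H]$ is bounded.

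For the easy direction, suppose $H$ is type-definable. The map $\mu\colon G\times G\to G$ given by $(x,y)\mapsto x^{-1}y$ is $\emptyset$-definable, so $E_H=\mu^{-1}(H)$ is type-definable, and hence so is its restriction $E_H\restr_K$ to the type-definable set $K$. As $E_H\restr_K$ is then a bounded, countably supported, type-definable equivalence relation, it is smooth by Fact~\ref{fct:tdf_smt}.

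For the hard direction I would apply Fact~\ref{fct:KPR_main} with $X=G$, $E=E_H$ and $Y=K$: here $K$ is type-definable and $E_H$-saturated (being a union of left cosets of $H$, as $H\leq K$), so the hypotheses hold. Assuming $E_H\restr_K$ is smooth, its restriction to each invariant piece $K\cap p(\mon)$ is again smooth, so the dichotomy forces $E_H\restr_{K\cap p(\mon)}$ to be type-definable for every $p$, and then $E_H\restr_{p(\mon)}$ is type-definable for every type $p$ meeting $K$ (the second clause of Fact~\ref{fct:KPR_main}, via Fact~\ref{fct:KPR_rem}). In particular, fixing a type $q$ that meets $H$, the quotient $q(\mon)/{E_H\restr_{q(\mon)}}$ is Hausdorff by Fact~\ref{fct:basic_mt_obs}\eqref{it:fct:basic_mt_obs:T_2}, and the $\emptyset$-invariant class $H\cap q(\mon)$ corresponds to a point fixed by $\Aut(\mon)$, hence a closed point, so $H\cap q(\mon)$ is type-definable.

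The main obstacle is the final, global step: upgrading ``$H\cap q(\mon)$ is type-definable for each type $q$'' to ``$H$ is type-definable''. This cannot be done type by type, since a uniform bound on the defining formulas across the continuum of types is exactly what is missing; it is also where the distinction between $\equiv_L$ and $\equiv_{KP}$ bites, because $H$ is only guaranteed to contain $G^{000}_A$ (not $G^{00}_A$), so one cannot simply pass to the compact Hausdorff group $G/G^{00}_A$ and run a Baire-category argument in the style of Fact~\ref{fct:miller_thm}. The way I would obtain the needed uniformity is to repackage the coset relation on a \emph{single} complete type: taking an enumeration $m$ of a countable model $M\supseteq A$ and using the identification of $\Gal(T)$ with $[m]_{\equiv}/{\equiv_L}$ from Fact~\ref{fct:basic_mt_obs}\eqref{it:fct:basic_mt_obs:Gal_model}, one transports $E_H$ to a bounded invariant equivalence relation living on the single type $[m]_{\equiv}$, whose type-definability is equivalent to that of $H$ and whose smoothness matches that of $E_H\restr_K$; since everything now sits over one type, Fact~\ref{fct:KPR_main} (indeed already Fact~\ref{fct:KPR_rem}) applies with no patching. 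Checking that this transport preserves both smoothness and type-definability faithfully, and in particular respects the Lascar level at which $H$ lives, is the delicate heart of the argument.
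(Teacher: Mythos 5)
This statement is not proved in the paper at all: it is quoted verbatim as a Fact from \cite[Corollary 5.7]{KPR15} and used as a black box (e.g.\ inside Lemma~\ref{lem:smt_transdef}), so there is no internal proof to compare your attempt against. Judged on its own merits, your easy direction is correct: after naming $A$ by constants, $E_H=\mu^{-1}(H)$ for the definable map $\mu(x,y)=x^{-1}y$ is type-definable, bounded and countably supported, hence smooth by Fact~\ref{fct:tdf_smt}. Your application of Fact~\ref{fct:KPR_main} with $X=G$, $E=E_H$, $Y=K$ is also legitimate and does yield that $E_H\restr_{p(\mon)}$ is type-definable for every type $p$ meeting $K$.

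The problem is the step you yourself flag as the ``delicate heart'': it is a genuine gap, and the mechanism you propose to close it does not exist as described. There is no natural way, inside the original theory, to turn the coset relation $E_H$ on $G$ into a bounded invariant equivalence relation on the single type $[m]_\equiv$: the identification $\Gal(T)\cong[m]_\equiv/{\equiv_L}$ concerns quotients of $\Aut(\mon)$, and $\Gal(T)$ does not map onto $G/H$ (the relevant quotient is of $G$ by $G^{000}_A$-related subgroups, an object a priori unrelated to the Galois group of $T$); moreover your transport loses track of $K$ entirely, whereas the smoothness hypothesis is only about $E_H\restr_K$, so the set $Y$ in any application of Fact~\ref{fct:KPR_main} after transport must be the image of $K$, not all of the new sort. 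The device that actually accomplishes a reduction of this kind in the literature (cf.\ \cite{KR14}) is to expand the structure by an \emph{affine copy} of $G$ (a principal homogeneous space as a new sort), on which all points realise the same type and $E_H$ becomes an invariant relation on that type; but this changes the theory, and transferring smoothness, boundedness and type-definability back and forth across the expansion is precisely the technical content one would have to supply --- in \cite{KPR15} the group statement is in fact obtained by a parallel (topological-dynamical) argument rather than by such a reduction. As it stands, your proof establishes only that $H$ is a bounded union of type-definable pieces $H\cap q(\mon)$, which, as you correctly note, does not give type-definability of $H$.
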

	
	(Note that in \cite{KPR15}, $G$ and $H$ were invariant over $\emptyset$, but this is an equivalent formulation, as we can add countably many parameters as constants to the language.)
	
	\section{Abstract orbital and weakly orbital equivalence relations}
	\label{sec:general}
	In this section, $G$ is an arbitrary group, while $X$ is a $G$-space, and neither has any additional structure. The goal here is to define and understand orbital and weakly orbital equivalence relations in such a general context. Note that the parts related to orbital equivalence relations in this and later sections can be read mostly independently of the (more technical) parts related to weak orbitality.

	Unless otherwise stated, all the equivalence relations in this paper are assumed to be invariant.
	\begin{dfn}
		A relation $R$ on $X$ is said to be ($G$-)\emph{invariant} if for every $g\in G$ and $x_1,x_2\in X$ we have $x_1\Rr x_2$ if and only if $gx_1\Rr gx_2$.\xqed{\lozenge}
	\end{dfn}
	

	\subsection{Orbital equivalence relations}
	\label{ssec:orbital}
	To every invariant equivalence relation, we can attach a canonical subgroup of $G$, and dually, each subgroup of $G$ gives us an equivalence relation on $X$, namely its orbit equivalence relation.
	\begin{dfn}
		If $E$ is an invariant equivalence relation, then we define $H_E$ as the group of all elements of $G$ which preserve every $E$-class setwise.
	
		If $H\leq G$, we denote by $E_H$ the equivalence relation on $X$ of lying in the same $H$-orbit.\xqed{\lozenge}
	\end{dfn}	
	
	\begin{ex}
		$E_H$ need not be invariant: for example, if $G=X$ is acting on itself by left translations, then $E_H$ (whose classes are just the right cosets of $H$) is invariant if and only if $H$ is a normal subgroup of $G$.\xqed{\lozenge}
	\end{ex}
	
	The orbital equivalence relations -- defined below -- are extremely well-behaved among the invariant equivalence relations. At the same time, in model-theoretic context, this is a very natural class to consider: all the classical strong types (namely the Lascar, Kim-Pillay and Shelah strong types) are orbital.
	
	\begin{dfn}
		An invariant equivalence relation $E$ is said to be \emph{orbital} if there is a subgroup $H$ of $G$ such that $E=E_H$ (i.e.\ $E$ is the relation of lying in the same orbit of $H$).\xqed{\lozenge}
	\end{dfn}
	
	(Note that if $E$ is orbital, then $E\subseteq E_G$, i.e.\ $E$-classes are subsets of $G$-orbits.)
%
	\begin{prop}
		\label{prop:orb_from_group}\leavevmode
		\begin{itemize}
			\item
			\label{rem:orb_norm}
			If $E$ is an invariant equivalence relation on $X$, then $H_E\unlhd G$ and $E_{H_E}\subseteq E$.
			\item
			If, in addition, $E$ is orbital, then $E=E_{H_E}$.
			\item
			If $H\unlhd G$, then $E_H$ is an invariant equivalence relation
			\item
			\label{rem:orb_to_group}
			If $E_H$ is an invariant equivalence relation, we have $H\leq H_{E_H}$.
		\end{itemize}
	\end{prop}
	\begin{proof}
		Straightforward.
%
	\end{proof}

	\begin{ex}
		The action of $\SO(2)$ on $S^1$ is free, and the group is commutative. This implies that the orbital equivalence relations correspond exactly to subgroups of $\SO(2)$ (in fact, because the action is transitive, those are all the invariant equivalence relations).\xqed{\lozenge}
	\end{ex}
	
	\begin{ex}
		\label{ex:3drotations}
		Consider the natural action of $\SO(3)$ on $S^2$. Certainly, the trivial and total relations are both invariant equivalence relations. Moreover, it is not hard to see that the equivalence relation identifying antipodal points is also invariant.
		
		In fact, those three are the only invariant equivalence relations. Among them, only the first two are orbital.\xqed{\lozenge}
	\end{ex}
	
	\subsection{Weakly orbital equivalence relations}
	As hinted at in the introduction, we want to find a generalisation of orbitality which includes equivalence relations invariant under transitive group actions. Here we define such a notion, and in Subsection~\ref{ssec:orb+trans_as_worb} we will see that it does indeed include both cases.

	The following notation is fundamental for this paper.
	\begin{dfn}
		\label{dfn:rsub}
		For arbitrary $H\leq G$ and $\tilde X\subseteq X$, let us denote by $R_{H,\tilde X}$ the relation (which may not be an equivalence relation) defined by
		\[
		x_1\Rr_{H,\tilde X} x_2\iff \exists g\in G \; \exists h\in H \ \ gx_1=hgx_2\in \tilde X.\xqed{\lozenge}
		\]
	\end{dfn}
	
	\begin{prop}
		\label{prop:alt_rsub}
		Note that $R_{H,\tilde X}$ may also be defined as the smallest relation $R$ such that:
		\begin{itemize}
			\item
			$R$ is invariant,
			\item
			for each $\tilde x\in \tilde X$ and $h\in H$ we have $\tilde x\Rr h\tilde x$.
		\end{itemize}
	\end{prop}
	\begin{proof}
		Straightforward.
	\end{proof}

	
	\begin{dfn}
		\label{dfn:worb}
		We say that $E$ is a \emph{weakly orbital} equivalence relation if there is a subset $\tilde X\subseteq X$ and a subgroup $H\leq G$ such that $E=R_{H,\tilde X}$.\xqed{\lozenge}
	\end{dfn}
	
	Note that, as in the orbital case, we always have $R_{H,\tilde X}\subseteq E_G$.
	
	\begin{ex}
		\label{ex:antipode_worb}
		The antipodism equivalence relation from Example~\ref{ex:3drotations} is weakly orbital: choose any point $\tilde x\in S^2$, and then choose a single rotation $\theta\in \SO(3)$ which takes $\tilde x$ to $-\tilde x$. Then $\tilde X:=\{\tilde x\}$ and $H:=\langle \theta\rangle$ witness weak orbitality.\xqed{\lozenge}
	\end{ex}
	
	For further examples of weakly orbital equivalence relations, see Subsection~\ref{ssec:worb_ex}.

%
%
%
%

	There is a useful, explicit description of ``classes'' of $R_{H,\tilde X}$.
	
	\begin{lem}
		\label{lem:worb_class_description}
		Let $R=R_{H,\tilde X}$. For every $x_0\in X$, we have
		\[
		\{x\mid x_0\mathrel R x \}=\bigcup_{g} g^{-1} H g\cdot x_0,
		\]
		where the union runs over $g\in G$ such that $g\cdot x_0\in \tilde X$.
	\end{lem}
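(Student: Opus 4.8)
The plan is simply to unwind Definition~\ref{dfn:rsub} and solve the defining equation for $x$. By definition, $x_0\Rr x$ holds if and only if there exist $g\in G$ and $h\in H$ with $g x_0 = h g x\in\tilde X$. The first thing I would observe is that the two parts of this condition decouple cleanly: the membership $g x_0\in\tilde X$ is exactly the condition indexing the union on the right-hand side, while the equation $g x_0 = h g x$ pins down $x$ in terms of $g$, $h$ and $x_0$. The whole proof is then a double inclusion obtained by reading this equivalence in each direction.

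For the inclusion $\subseteq$, I would take $x$ with $x_0\Rr x$, witnessed by some $g$ and $h$. Since $g x_0 = h g x\in\tilde X$, in particular $g x_0\in\tilde X$, so this $g$ is one of the indices occurring in the union. Rearranging $g x_0 = h g x$ gives $x = g^{-1} h^{-1} g\cdot x_0$, and as $h^{-1}\in H$ this exhibits $x\in g^{-1} H g\cdot x_0$, hence $x$ lies in the right-hand side. For the reverse inclusion $\supseteq$, I would fix $g$ with $g x_0\in\tilde X$ and $x\in g^{-1} H g\cdot x_0$, say $x = g^{-1} h' g\cdot x_0$ with $h'\in H$. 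Then $g x = h' g x_0$, so $g x_0 = (h')^{-1} g x$; setting $h:=(h')^{-1}\in H$ yields $g x_0 = h g x\in\tilde X$, which is precisely a witness for $x_0\Rr x$.

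I do not anticipate any genuine obstacle: the argument is a direct manipulation of the definition. The only points requiring a little care are the bookkeeping of the inverse $h\leftrightarrow h^{-1}$ when passing between the defining equation and the conjugated-coset expression $g^{-1}Hg\cdot x_0$, and the fact that $R_{H,\tilde X}$ is not assumed symmetric, so one must keep the roles of $x_0$ and $x$ straight throughout (the base point $x_0$ is the one required to land in $\tilde X$ under $g$).
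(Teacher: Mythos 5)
Your proof is correct and is essentially identical to the paper's own argument: both directions are the same direct unwinding of Definition~\ref{dfn:rsub}, with the rearrangement $x=g^{-1}h^{-1}g\cdot x_0$ in one direction and the substitution $h:=(h')^{-1}$ in the other. Your version merely spells out the inverse bookkeeping that the paper leaves implicit.
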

	\begin{proof}
		Follows from Proposition~\ref{prop:alt_rsub}.
	\end{proof}

	\begin{dfn}
		\label{dfn:max_witness}
		A \emph{pair of maximal witnesses} (of weak orbitality of $E$) is a pair $(H,\tilde X)$ which witnesses weak orbitality and is maximal with respect to (joint) inclusion. In this case, we say that each of $H$ and $\tilde X$ is a \emph{maximal witness} of orbitality of $E$.\xqed{\lozenge}
	\end{dfn}
	
	The following lemma is, in part, an analogue of the second bullet of Proposition~\ref{prop:orb_from_group}.
	
	\begin{lem}
		\label{lem:worb_maximal}
		Consider $R=R_{H,\tilde X}$. Then:
		\begin{enumerate}
			\item
			\label{it:lem:worb_maximal_set}
			$R=R_{H,\tilde X'}$, where $\tilde{X'}:=\{x\in X \mid \forall h\in H\ \ x\Rr hx\}$, and
			\item
			\label{it:lem:worb_maximal_group}
			$R=R_{H',\tilde X}$, where $H':=\{g\in G \mid \forall \tilde x\in \tilde X \ \ \tilde x \Rr g\tilde x \}$.
		\end{enumerate}
		
		Moreover, for an equivalence relation $E=R_{H,\tilde X}$:
		\begin{itemize}
			\item
			applying the two operations, in either order, yields a maximal pair of witnesses in the sense of Definition~\ref{dfn:max_witness} (in particular, each of $\tilde X'$ and $H'$ is a maximal witness),
			\item
			every maximal witness $\tilde X$ is a union of $E$-classes.
		\end{itemize}
	\end{lem}
	\begin{proof}
		For \eqref{it:lem:worb_maximal_set}, (by the definition of $\tilde X'$) $R$ is an invariant relation such that for all $\tilde x\in \tilde X'$ and $h\in H$ we have $\tilde x \Rr h\tilde x$. Since $R_{H,\tilde X'}$ is, by Proposition~\ref{prop:alt_rsub}, the finest such relation, it follows that $R_{H,\tilde X'}\subseteq R$. On the other hand, $\tilde X\subseteq \tilde X'$, so $R=R_{H,\tilde X}\subseteq R_{H,\tilde X'}$. The proof of \eqref{it:lem:worb_maximal_group} is analogous.
		
		The first bullet of the ``moreover'' part is clear. The second one follows from the fact that $\tilde X'$ is a union of $E$-classes (which is true because $R$ is invariant).
	\end{proof}

	Note that, in contrast to the orbital case, where we have a canonical maximal witness (namely, $H_E$), in the weakly orbital case, the maximal pairs of witnesses are, in general, far from canonical, for instance because for any $g\in G$ we have $R_{H,\tilde X}=R_{gHg^{-1},g\cdot \tilde X}$. In fact, even up to this kind of conjugation, the choice may not be canonical.
	
	\subsection{Orbitality and weak orbitality; transitive actions}
	\label{ssec:orb+trans_as_worb}
	Now, we proceed to show that the class of weakly orbital equivalence does indeed satisfy its stated objective: generalising the orbital equivalence relations and equivalence relations invariant under transitive actions.
	\begin{prop}
		\label{prop:orb_is_worb}

		Every orbital equivalence relation is weakly orbital. In fact, if $E=E_H$ is an invariant equivalence relation, then $E=R_{H,X}(=R_{H_E,X})$.
%
	\end{prop}
	\begin{proof}
		$E_H$ is by definition the finest relation such that for each $x\in X$ and $h\in H$ we have $x\Er_H hx$. If it is also invariant, we have by Proposition~\ref{prop:alt_rsub} that $R_{H,X}=E_H$.
	\end{proof}
	(It is worth noting that a weakly orbital equivalence relation is orbital if and only if its weak orbitality is witnessed by a normal subgroup of $G$, and also if and only if its weak orbitality is witnessed by $\tilde X=X$.)
%
%
%
%
	
	When the action is transitive, every invariant equivalence relation is weakly orbital. This generalises the idea from Example~\ref{ex:antipode_worb}. (Note that it corresponds to the context of Fact~\ref{fct:KPR_main}.)
	
	\begin{prop}
		\label{prop:single_orbit}
		Suppose the action of $G$ is transitive, and $\tilde x \in X$ is arbitrary.	Then for any invariant equivalence relation $E$ we have $E=R_{\Stab_G\{[\tilde x ]_E\},\{\tilde x \}}$ (where $\Stab_G\{[\tilde x ]_E\}$ is the setwise stabiliser of $[\tilde x ]_E$, i.e.\ $\{g\in G\mid \tilde x\Er g\tilde x \}$).
	\end{prop}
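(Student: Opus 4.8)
The plan is to prove the two inclusions $R_{H,\{\tilde x\}}\subseteq E$ and $E\subseteq R_{H,\{\tilde x\}}$ directly, writing $H:=\Stab_G\{[\tilde x]_E\}$ and $R:=R_{H,\{\tilde x\}}$. The given description $H=\{g\in G\mid \tilde x\Er g\tilde x\}$ (which agrees with the setwise stabiliser precisely because $E$ is invariant, so that $g[\tilde x]_E=[g\tilde x]_E$) already tells us that $H$ is a subgroup, hence closed under inverses; thus $h\in H$ if and only if $\tilde x\Er h^{-1}\tilde x$. Unwinding Definition~\ref{dfn:rsub} with $\tilde X=\{\tilde x\}$, the relation $R$ reads: $x_1\Rr x_2$ iff there are $g\in G$ and $h\in H$ with $gx_1=\tilde x$ and $gx_2=h^{-1}\tilde x$.

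For $R\subseteq E$, suppose $x_1\Rr x_2$ with witnesses $g,h$ as above. Then $gx_1=\tilde x\Er h^{-1}\tilde x=gx_2$, where the middle relation holds because $h^{-1}\in H$; applying invariance of $E$ with $g^{-1}$ yields $x_1\Er x_2$. This direction uses only the definition of $H$ together with invariance, and does not need transitivity.

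The more substantial inclusion is $E\subseteq R$, and this is where transitivity is essential. Given $x_1\Er x_2$, I would first use transitivity to pick $g\in G$ with $gx_1=\tilde x$; invariance then gives $\tilde x=gx_1\Er gx_2$. Using transitivity a second time, choose $k\in G$ with $k\tilde x=gx_2$. The key observation is that this $k$ lies in $H$: indeed $\tilde x\Er gx_2=k\tilde x$, which is exactly the membership condition for $H$. Setting $h:=k^{-1}\in H$, we obtain $hgx_2=k^{-1}(k\tilde x)=\tilde x=gx_1$, so $g$ and $h$ witness $x_1\Rr x_2$.

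The only genuine obstacle is the bookkeeping with the quantifiers of Definition~\ref{dfn:rsub} and checking that the element produced by transitivity really lands in $H$; both points dissolve once one notices that the membership condition for $H$ is precisely $\tilde x\Er k\tilde x$. As an alternative that avoids producing witnesses by hand, one could instead invoke Lemma~\ref{lem:worb_class_description}: for a fixed $x_0$, pick $g$ with $gx_0=\tilde x$, and note that every term $g^{-1}Hg\cdot x_0=g^{-1}(H\tilde x)$ of the displayed union equals $g^{-1}[\tilde x]_E=[x_0]_E$ once one establishes the identity $H\tilde x=[\tilde x]_E$ (again using transitivity and the description of $H$). This gives $[x_0]_R=[x_0]_E$ for every $x_0$, hence $R=E$.
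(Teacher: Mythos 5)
Your proof is correct and takes essentially the same route as the paper's: both use transitivity to move $x_1$ and $x_2$ onto $\tilde x$, and then observe that membership of the resulting group element in $\Stab_G\{[\tilde x]_E\}$ is equivalent, via invariance of $E$, to $x_1\Er x_2$. The paper merely compresses your two inclusions into a single biconditional about $h=g_2g_1^{-1}$, which is exactly the witness $h=k^{-1}$ you construct, so the underlying computation is identical.
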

	\begin{proof}
		By transitivity, $\Stab_G\{[\tilde x ]_E\}\cdot \tilde x=[\tilde x ]_E$. Proposition follows from Lemma~\ref{lem:worb_class_description}.
%
	\end{proof}
	
	\begin{rem}
		Note that even for transitive actions, $R_{H,\{\tilde x\}}$ is not in general an equivalence relation, as it may fail to be transitive.\xqed{\lozenge}
	\end{rem}
	
	In a way, the cases described in Propositions~\ref{prop:orb_is_worb} and \ref{prop:single_orbit} are orthogonal classes of weakly orbital equivalence relations: in the former, we can take $\tilde X$ to be the entire space, while in the latter, we can take $\tilde X$ singleton. They also roughly correspond to the two cases considered in Fact~\ref{fct:KPR_rem}.
	
	
%

	Given an arbitrary invariant equivalence relation $E$ on $X$, we can attach to each $G$-orbit $G\cdot \tilde x$ with a fixed ``base point'' $\tilde x$ a subgroup $H_{\tilde x}:=\Stab_G\{[\tilde x ]_E\}$, in which case $G\cdot {\tilde x}/E$ is isomorphic (as a $G$-space) with $G/H_{\tilde x}$.
		
	Along with Lemma~\ref{lem:worb_class_description}, this gives an intuitive description of weakly orbital equivalence relations (among the invariant equivalence relations) as those for which we have a set $\tilde X$ which restricts choice of ``base points'', and at the same time ``uniformly'' limits the manner in which the group $H_{\tilde x}$ changes between various orbits: we can only take a union of conjugates of a fixed subgroup (and the conjugates we take depend also on $\tilde X$).
		
	In later sections, we will consider the behaviour of $E$ when $\tilde X$ is type-definable or closed. We can think of that as somehow ``smoothing'' $E$ between the $G$-orbits.

	\subsection{Further examples of weakly orbital equivalence relations}
	\label{ssec:worb_ex}
	In the first example, we define a class of weakly orbital equivalence relations which are not orbital, on spaces $X$ with large $\lvert X/G\rvert$ (so the action is far from transitive).
	\begin{ex}
		Consider the action of $G$ on $X=G^2$ by left translation in the first coordinate only. Let $\tilde X\subseteq G^2$ be the diagonal, and $H$ be any subgroup of $G$. Then $R_{H,\tilde X}$ is a weakly orbital equivalence relation on $X$ whose classes are sets of the form $(g_1g_2^{-1}Hg_2)\times \{g_2\}$. The relation is orbital if and only if $H$ is normal (because the action is free). Meanwhile, $\lvert X/G\rvert=\lvert G\rvert$.\xqed{\lozenge}
	\end{ex}
	(More generally, if we have a family of weakly orbital equivalence relations for $G$ with the same witnessing group $H$, their disjoint union is weakly orbital.)
%
%
%
	
	The second example shows us that we cannot, in general, choose $\tilde X$ as a transversal of $X/G$ (i.e.\ a set intersecting each orbit at precisely one point).
	
	\begin{ex}
		\label{ex:worb_nontrivial}
		Let $F$ be an arbitrary field. Consider the affine group $G=F^3\rtimes \GL_3(F)$, and let $G'$ be a copy of $G$, disjoint from it.
		
		Let $\ell\subseteq F^3$ be a line containing the origin. Choose a plane $\pi\subseteq F^3$ containing $\ell$. Let $E$ be the invariant equivalence relation on $X=G\sqcup G'$ (on which $G$ acts by left translations) which is:
		\begin{itemize}
			\item
			on $G$: $(x_1,g_1)\Er (x_2,g_2)$ whenever $g_1=g_2$ and $x_1-x_2\in g_2\cdot \pi$,
			\item
			on $G'$: $(x_1',g_1')\Er (x_2',g_2')$, whenever $g_1'=g_2'$ and $x_1'-x_2'\in g_2'\cdot \ell$ (slightly abusing the notation).
		\end{itemize}
		
		Put $H=\ell\times \{I\}\leq G$, let $A\subseteq \GL_3(F)$ be such that $A^{-1}\cdot l=\pi$ and $I\in A$, and finally let $\tilde X=(\{0'\}\times \{I'\})\cup (\{0\}\times A)$ (where $0'$ is the neutral element in the vector space component of $G'$).
		
		Then $E$ is weakly orbital, as witnessed by $\tilde X$ and $H$ (to see this, recall Lemma~\ref{lem:worb_class_description} and notice that the $E$-class and $R_{\tilde X,H}$-``class'' of $I$ both are the union of $I+a^{-1}\cdot \ell$ over $a\in A$, while the class of $I'$ is just $I'+\ell$, and then use the fact that both $E$ and $R_{\tilde X,H}$ are invariant).
		
		We will show that $E$ does not have any $\tilde X_1$ witnessing weak orbitality which intersects each of $G$ and $G'$ at exactly one point. Suppose that $E=R_{H_1,\tilde X_1}$ and $\tilde X_1\cap G=\{g\}$, while $\tilde X_1\cap G'=\{g'\}$. Since the action of $G$ on $X$ is free, it follows from Lemma~\ref{lem:worb_class_description} that $[g]_E=H_1g$ and $[g']_E=H_1g'$. This implies that in fact $H_1\leq F^3$, and the first equality implies that $H_1$ is a plane, while the second one implies that it is a line, which is a contradiction.\qed
	\end{ex}

	\section{(Weakly) orbital equivalence relations for automorphism groups}
	\label{sec:aut}
	In this section, we prove the Main Theorem (i.e.\ Corollary~\ref{cor:smt_aut}). As a side result, we show that orbitality and weak orbitality are well-defined model-theoretic properties for bounded invariant equivalence relations (see Corollary~\ref{cor:mtprop}).
	
	Throughout this section, $X$ is a $\emptyset$-type-definable subset of a small product of sorts in $\fC$, while $G$ is just $\Aut(\fC)$. We use the letter $\sigma$ for arbitrary automorphisms (i.e.\ members of $G=\Aut(\fC)$), and $\Gamma$ and $\gamma$ where we would use $H$ and $h$ in the rest of the paper, so as to follow the notation of \cite{KR16}.
	
	Furthermore, we identify $\Gal(T)$ with $[m]_\equiv/{\equiv_L}$ for a tuple $m$ enumerating a small model, as explained in Fact~\ref{fct:basic_mt_obs}\eqref{it:fct:basic_mt_obs:Gal_model}, and we consider the logic topology on various products; for example, a closed set in $\Gal(T)\times X/{\equiv_L}=([m]_\equiv\times X)/({\equiv_L}\times {\equiv_L})$ is a set whose preimage in $[m]_\equiv\times X$ is type-definable. Note that it is \emph{not} a priori the same as being closed in the product of logic topologies on $\Gal(T)$ and $X/{\equiv_L}$ (the product topology might be coarser)! Similarly, the product relation ${\equiv_L}\times{\equiv_L}$ on a product of two invariant sets is usually not the finest bounded invariant equivalence relation on it (so it coarser than $\equiv_L$ on the product).

	\subsection{Preparatory lemmas in the case of the automorphism group action}
	For brevity, let us write $\bar x$, for any $[x]_{\equiv_L} \in X/{\equiv_L}$, as well as $\bar \sigma$ for $\sigma\Autf_L(\fC)\in \Gal(T)$, and $\bar n$ for $[n]_{\equiv_L}\in [m]_\equiv/{\equiv_L}$. By Fact~\ref{fct:basic_mt_obs}\eqref{it:fct:basic_mt_obs:Gal_model}, we identify $\bar n$ with the sole $\bar{\sigma}\in \Gal(T)$ such that $\bar{\sigma}(\bar m)=\bar n$, where $\bar m=[m]_{\equiv_L}$, and thus also write $\bar n\cdot \bar x$ (meaning $\bar \sigma(\bar x)$).
	\begin{lem}
		\label{lem:agree_aut}
		Let $X$ be a type-definable set, and consider the action of $\Gal(T)$ on $X/{\equiv_L}$. Then:
		\begin{enumerate}
			\item
			\label{it:lem:agree_aut_mult}
			the map $(\bar \sigma,\bar x)\mapsto \bar\sigma(\bar x)$ is continuous,
			\item
			\label{it:lem:agree_aut_diag}
			the map $(\bar\sigma,\bar x_1,\bar x_2)\mapsto (\bar\sigma(\bar x_1),\bar\sigma(\bar x_2))$ is continuous,
			\item
			\label{it:lem:agree_aut_orbit}
			for each $\sigma\in \Aut(\fC)$, the map $\bar x\mapsto (\bar x,\bar \sigma(\bar x))$ is continuous,
			\item
			\label{it:lem:agree_aut_clsd}
			the map $(\bar x,\bar \sigma)\mapsto (\bar x,\bar \sigma(\bar x))$ is closed.
		\end{enumerate}
		(Where we consider the logic topology on $\Gal(T)$, $X/{\equiv_L}$, as well as their products -- note again that the logic topology on the product may be finer than the product of logic topologies.)
	\end{lem}
	\begin{proof}
		Consider the partial type $\Phi(n,x,y)= (mx\equiv ny\land x\in X)$.
		\begin{clm*}
			For any $n\in [m]_\equiv$ and $x,y\in X$, the following are equivalent:
			\begin{itemize}
				\item
				$\bar n\cdot \bar x=\bar y$,
				\item
				$\models (\exists y')\Phi(n,x,y')\land y\equiv_L y'$, and
				\item
				$\models (\exists n')\Phi(n',x,y)\land n\equiv_L n'$.
			\end{itemize}
		\end{clm*}
		\begin{clmproof}
			Suppose $\bar n\cdot \bar x=\bar y$. Then we have some $\sigma\in \Aut(\fC)$ such that $\sigma(\bar m)=\bar n$ and $\sigma(\bar x)=\bar y$. This means that we have some $\tau\in \Autf_L(\fC)$ such that $\tau(\sigma(m))=n$. But $\overline{\tau\circ \sigma}=\bar \sigma$, and taking $y'=\tau\circ\sigma(x)$ gives us the second bullet. For the reverse implication, if $\sigma$ witnesses that $mx\equiv ny'$, then in particular $\bar{\sigma}(\bar m)=\bar{n}$, so by definition $\bar n\cdot \bar x=\bar{\sigma}(\bar x)=\overline{\sigma(x)}=\bar {y'}=\bar y$. The proof that the third bullet is equivalent to the first is analogous.
		\end{clmproof}
		
		It follows that for any $A\subseteq X/{\equiv_L}$ we have $\bar n\cdot \bar x\in A$ if and only if $\models (\exists y)\,\, \bar y\in A\land \Phi(n,x,y)$ (because ``$\bar y\in A$'' is a $\equiv_L$-invariant condition), which is a type-definable condition about $n$ and $x$, if $A$ is closed. This gives us the continuity of $(\bar n,\bar x)\mapsto \bar n\cdot \bar x$, i.e.\ \eqref{it:lem:agree_aut_mult}.
		
		Because the induced maps $(X^2)/{\equiv_L}\to (X/{\equiv_L})^2$ and $\Gal(T)\times (X^2)/{\equiv_L}\to \Gal(T)\times (X/{\equiv_L})^2$ are a (topological) quotient maps, \eqref{it:lem:agree_aut_diag} follows from \eqref{it:lem:agree_aut_mult}, applied to $X^2$.
		
		To obtain the continuity of $\bar x\mapsto (\bar x,\bar n\cdot \bar x)$ for all $\bar n$, note that if we fix any $\bar n\in \Gal(T)$ and some $A\subseteq (X\times X)/({\equiv_L}\times {\equiv_L})$, then likewise $(\bar x,\bar n\cdot \bar x)\in A$ exactly when $\models(\exists y)\,\, (\bar x,\bar y)\in A\land \Phi(n,x,y)$, which is again a type-definable condition about $x$ whenever $A$ is closed.
		
		Similarly, for the closedness of $(\bar n,\bar x)\mapsto (\bar x,\bar n\cdot \bar x)$, note that $(\bar x,\bar y)$ is in the image of $A\subseteq ([m]_{\equiv}\times X)/({\equiv_L}\times {\equiv_L})$ exactly when $\models (\exists n)\,\, (\bar n,\bar x)\in A\land \Phi(n,x,y)$, which is a type-definable condition about $x$ and $y$, as long as $A$ is closed.
	\end{proof}

	\begin{prop}
		\label{prop:orb_to_gal}
		Suppose $E$ is a bounded invariant equivalence relation on an invariant set $X$. Suppose also that $\Gamma\leq \Aut(\fC)$ contains $\Autf_L(\fC)$ [and suppose  $\tilde X\subseteq X$]. Write $\bar{\Gamma}:=\Gamma/\Autf_L(\fC)$ [and $\bar{\tilde X}:=\tilde X/{\equiv_L}$]. Then the following are equivalent.
		\begin{enumerate}
			\item
			\label{it:prop:orb_to_gal:1}
			$E=E_\Gamma$ [$E=R_{\Gamma,\tilde X}$]
			\item
			\label{it:prop:orb_to_gal:2}
			$\bar E=E_{\bar \Gamma}$ [$\bar E=R_{\bar \Gamma,\bar {\tilde X}}$]
		\end{enumerate}
		In particular, $E$ is [weakly] orbital if and only if $\bar E$ is (because we can always assume that $\Autf_L(\fC)$ is contained in the group witnessing [weak] orbitality, as $\Autf_L(\fC)$ fixes each class setwise).
	\end{prop}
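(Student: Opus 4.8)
The plan is to reduce the equality of relations on $X$ to the equality of their pushforwards on $X/{\equiv_L}$, by establishing a single pointwise dictionary between the two sides. First I would record the ambient structure. Since $E$ is bounded and invariant, it is coarser than $\equiv_L$ (the finest bounded invariant equivalence relation), so $E$ is a union of $\equiv_L$-classes and its pushforward $\bar E$, given by $\bar x_1\mathrel{\bar E}\bar x_2\iff x_1\Er x_2$, is well defined. The group $\Aut(\mon)$ acts on $X/{\equiv_L}$, and $\Autf_L(\mon)$ acts trivially (its orbits are exactly the $\equiv_L$-classes), so the action factors through $\Gal(T)$; moreover the quotient map $q\colon X\to X/{\equiv_L}$, $x\mapsto\bar x$, is equivariant along the projection $\pi\colon\Aut(\mon)\to\Gal(T)$, in the sense that $q(\sigma x)=\pi(\sigma)\cdot q(x)$.

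The heart of the argument is the pointwise claim that, for all $x_1,x_2\in X$,
\[
x_1\Rr_{\Gamma,\tilde X}x_2 \iff \bar x_1\Rr_{\bar\Gamma,\bar{\tilde X}}\bar x_2,
\]
together with the orbital analogue $x_1\Er_\Gamma x_2\iff\bar x_1\Er_{\bar\Gamma}\bar x_2$. The left-to-right direction is routine: given witnesses $\sigma\in\Aut(\mon)$ and $\gamma\in\Gamma$ with $\sigma x_1=\gamma\sigma x_2\in\tilde X$, applying $q$ yields $\pi(\sigma)\bar x_1=\pi(\gamma)\pi(\sigma)\bar x_2\in\bar{\tilde X}$ with $\pi(\gamma)\in\bar\Gamma$. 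The right-to-left direction is the crux, and I expect it to be the main obstacle. Here one lifts $\bar\sigma,\bar\gamma$ to some $\sigma\in\Aut(\mon)$, $\gamma\in\Gamma$; but the membership $\bar\sigma\bar x_1\in\bar{\tilde X}$ only guarantees $\sigma x_1\equiv_L\tilde x$ for some $\tilde x\in\tilde X$, and the equality $\bar\sigma\bar x_1=\bar\gamma\bar\sigma\bar x_2$ only gives $\sigma x_1\equiv_L\gamma\sigma x_2$. The key point is that both $\equiv_L$-defects can be absorbed by $\Autf_L(\mon)$: since $\equiv_L$-classes are $\Autf_L(\mon)$-orbits, one first precomposes $\sigma$ with a suitable $\tau\in\Autf_L(\mon)$ so that $(\tau\sigma)x_1=\tilde x\in\tilde X$ \emph{exactly} (this leaves $\pi(\tau\sigma)=\bar\sigma$ unchanged), and then replaces $\gamma$ by $\rho\gamma$ for a suitable $\rho\in\Autf_L(\mon)$ so that $(\rho\gamma)(\tau\sigma)x_2=(\tau\sigma)x_1$ exactly; because $\Autf_L(\mon)\leq\Gamma$, we still have $\rho\gamma\in\Gamma$. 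This produces genuine witnesses for $x_1\Rr_{\Gamma,\tilde X}x_2$. The orbital case is the same computation with $\tilde X$ dropped, and notably no saturation hypothesis on $\tilde X$ is needed.

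With the dictionary in hand the proposition follows formally: it shows that $R_{\Gamma,\tilde X}$ is $\equiv_L$-saturated with pushforward $R_{\bar\Gamma,\bar{\tilde X}}$, and $E$ is $\equiv_L$-saturated with pushforward $\bar E$, and two $\equiv_L$-saturated relations coincide on $X$ iff their pushforwards coincide on $X/{\equiv_L}$; the orbital case is identical. For the final ``in particular'' clause I would first justify the reduction to $\Autf_L(\mon)\leq\Gamma$: if $E=R_{\Gamma,\tilde X}$ for an arbitrary $\Gamma$, then since each $\equiv_L$-class lies inside a single $E$-class we have $\tilde x\Er\tau\tilde x$ for every $\tilde x\in\tilde X$ and $\tau\in\Autf_L(\mon)$, so by Lemma~\ref{lem:worb_maximal} the maximal group witness already contains $\Autf_L(\mon)$ and we may replace $\Gamma$ by $\Gamma\Autf_L(\mon)$ without changing $R_{\Gamma,\tilde X}$; the orbital case is analogous. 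Then $E$ is [weakly] orbital, as witnessed by some such $\Gamma$ (and $\tilde X$), iff by the established equivalence $\bar E$ is [weakly] orbital, witnessed by $\bar\Gamma$ (and $\bar{\tilde X}$); in the converse direction one lifts a witness $\bar H\leq\Gal(T)$ to $\pi^{-1}(\bar H)\geq\Autf_L(\mon)$ and $\bar Y$ to $q^{-1}(\bar Y)$ and applies the proposition again. Everything outside the absorption step of the dictionary is bookkeeping.
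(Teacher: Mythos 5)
Your proof is correct, and its skeleton --- transferring everything along the quotient $q\colon X\to X/{\equiv_L}$, using that $\equiv_L$-classes are exactly the $\Autf_L(\mon)$-orbits and that $E$, being bounded and invariant, is a union of $\equiv_L$-classes --- is the same as the paper's; the difference lies in how the weakly orbital case is executed. The paper first reduces to the case $\tilde X=\Autf_L(\mon)\cdot\tilde X$ (citing Lemma~\ref{lem:worb_maximal}; this saturation changes neither $R_{\Gamma,\tilde X}$ nor $\bar{\tilde X}$) and then transfers whole classes through $q$ and $q^{-1}$ via the explicit class description of Lemma~\ref{lem:worb_class_description}: the equality $[\bar x]_{\bar E}=\bigcup_{\bar\sigma}\bar\sigma^{-1}[\bar\Gamma\cdot\bar\sigma(\bar x)]$ downstairs pulls back to $[x]_E=\bigcup_{\sigma}\sigma^{-1}[\Gamma\cdot\sigma(x)]$ upstairs. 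You instead prove a pointwise dictionary between the two defining formulas, lifting the witnesses $(\bar\sigma,\bar\gamma)$ to honest witnesses $(\tau\sigma,\rho\gamma)$ by absorbing the two $\equiv_L$-defects into $\Autf_L(\mon)\leq\Gamma$. This is a self-contained variant: it invokes neither Lemma~\ref{lem:worb_class_description} nor any modification of $\tilde X$ --- your absorption step is precisely what makes an unsaturated $\tilde X$ harmless --- at the cost of slightly more explicit bookkeeping with representatives; the paper's route, conversely, reuses the structural lemmas of Section~\ref{sec:general} and keeps the witness manipulation implicit. Your treatment of the ``in particular'' clause (the maximal group witness contains $\Autf_L(\mon)$, so one may pass to $\Gamma\Autf_L(\mon)$; witnesses downstairs lift through $\pi^{-1}$ and $q^{-1}$) is exactly the paper's parenthetical justification, spelled out in full.
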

	\begin{proof}
		Consider the quotient map $q\colon X\to X/{\equiv_L}$. Then we have:
		\begin{equation}
		\label{eq:prop:orb_to_gal:1}
		\tag{$\dagger$}
		q[\Gamma\cdot x]=\bar{\Gamma}\cdot \bar x,
		\end{equation}
		and, since $\Gamma\cdot x$ is an $\equiv_L$-invariant set, conversely:
		\begin{equation}
		\label{eq:prop:orb_to_gal:2}
		\tag{$\dagger\dagger$}
		\Gamma\cdot x=q^{-1}[\bar{\Gamma}\cdot \bar x].
		\end{equation}

		For the orbital case, the implication \eqref{it:prop:orb_to_gal:1}$\Rightarrow$\eqref{it:prop:orb_to_gal:2} is an immediate consequence of \eqref{eq:prop:orb_to_gal:1} -- $\bar E$-classes are the $q$-images of $E$-classes, while $\bar{\Gamma}$-orbits are the $q$-images of $\Gamma$-orbits. The converse is analogous, as by \eqref{eq:prop:orb_to_gal:2}, $\Gamma$-orbits are the $q$-preimages of $\bar \Gamma$-orbits and of course $E$-classes are $q$-preimages of $\bar E$-classes.
		
		The weakly orbital case can be proved similarly: by Lemma~\ref{lem:worb_maximal}, we can assume that
		\begin{equation}
			\label{eq:prop:orb_to_gal_3}
			\tag{$*$}
			\tilde X=\Autf_L(\fC)\cdot\tilde X,
		\end{equation}
		Then we can just apply Lemma~\ref{lem:worb_class_description}: if
		\[
		[\bar x]_{\bar E}=\bigcup_{\bar \sigma}\bar \sigma^{-1}[\bar \Gamma\cdot \bar\sigma(\bar x)]
		\]
		(where the union runs over $\bar \sigma$ such that $\bar \sigma(\bar x)\in \bar{\tilde X}$), then also
		\[
		[x]_E=q^{-1}[[\bar x]_{\bar E}]=\bigcup_{\bar \sigma}q^{-1}[\bar \sigma^{-1}[\bar \Gamma\cdot \bar\sigma(\bar x)]]=\bigcup_{\sigma}\sigma^{-1}[\Gamma\cdot \sigma(x)],
		\]
		where the last union runs over $\sigma$ such that $\sigma(x)\in \tilde X$. To see the last equality, just note that (by \eqref{eq:prop:orb_to_gal_3}) $\sigma(x)\in \tilde X$ if and only if $\bar\sigma(\bar x)\in \bar{\tilde X}$. This yields \eqref{it:prop:orb_to_gal:2}$\Rightarrow$\eqref{it:prop:orb_to_gal:1}, and the opposite implication is analogous.
	\end{proof}

	\begin{cor}
		\label{cor:mtprop}
		{}[Weak] orbitality of a bounded invariant equivalence relation is a model-theoretic property, i.e.\ it does not depend on the choice of the monster model.
	\end{cor}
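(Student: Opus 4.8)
The plan is to deduce everything from Proposition~\ref{prop:orb_to_gal} together with the model-independence of the Galois group. First, since $E$ is bounded and invariant it is refined by $\equiv_L$ (the finest bounded invariant equivalence relation), so $\equiv_L\subseteq E$ and the induced relation $\bar E$ on $X/{\equiv_L}$ is well defined. By Proposition~\ref{prop:orb_to_gal}, $E$ is [weakly] orbital precisely when $\bar E$ is [weakly] orbital for the action of $\Gal(T)=\Aut(\mon)/\Autf_L(\mon)$ on $X/{\equiv_L}$. Because orbitality and weak orbitality were defined in Section~\ref{sec:general} purely in terms of an abstract group acting on a set equipped with a distinguished invariant equivalence relation, it suffices to show that the data consisting of $\Gal(T)$, the $\Gal(T)$-set $X/{\equiv_L}$, and the relation $\bar E$ is, up to the obvious notion of isomorphism, independent of the choice of monster model.

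The group $\Gal(T)$ is model-independent by Fact~\ref{fct:basic_mt_obs}. For the action on $X/{\equiv_L}$ together with $\bar E$, I would compare two monster models; by passing to a common larger monster it is enough to treat $\mon\preceq\mon'$ with $\mon$ small relative to $\mon'$ and each a monster at its own scale. The inclusion $X(\mon)\hookrightarrow X(\mon')$ descends to a map $X(\mon)/{\equiv_L}\to X(\mon')/{\equiv_L}$, and I claim it is a $\Gal(T)$-equivariant bijection carrying $\bar E$ to $\bar E$. Equivariance is immediate from the formula $\bar\sigma\cdot\bar a=\overline{\sigma(a)}$ once automorphisms of $\mon$ are extended to $\mon'$ and $\Gal(T)$ is identified across the two models. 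Well-definedness and injectivity amount to the absoluteness of Lascar strong types, i.e.\ $\equiv_L^{\mon}=\equiv_L^{\mon'}\restr_{X(\mon)}$: the upward inclusion is clear (a Lascar chain in $\mon$ remains one in $\mon'$), while the downward inclusion is the standard nontrivial half. For surjectivity, every complete type $p\subseteq X$ over $\emptyset$ is realised in $\mon$, and by homogeneity each of $p(\mon)/{\equiv_L}$ and $p(\mon')/{\equiv_L}$ is a single (transitive) $\Gal(T)$-orbit; an equivariant map from a nonempty orbit into an orbit is onto, so the induced map is surjective over each $p$, hence surjective. Finally, compatibility with $\bar E$ follows since $E$ is specified independently of the monster model, giving $E^{\mon}=E^{\mon'}\restr_{X(\mon)}$, so that $\bar E^{\mon}$ corresponds to $\bar E^{\mon'}$.

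The main obstacle is precisely this absoluteness step: checking that $\equiv_L$ (and hence the $\Gal(T)$-set $X/{\equiv_L}$ and the relation $\bar E$) does not change when the monster is enlarged, the delicate part being the downward direction of $\equiv_L^{\mon}=\equiv_L^{\mon'}\restr_{X(\mon)}$ and the surjectivity onto each $\Gal(T)$-orbit. Granting this, the abstract character of [weak] orbitality immediately transfers the property between $\mon$ and $\mon'$, which is the content of the corollary.
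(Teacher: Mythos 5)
Your proof is correct and takes essentially the same approach as the paper: reduce via Proposition~\ref{prop:orb_to_gal} to [weak] orbitality of $\bar E$ under the action of $\Gal(T)$ on $X/{\equiv_L}$, and then use the fact that this data ($\Gal(T)$, $X/{\equiv_L}$, $\bar E$, and the action) is independent of the choice of monster model. The only difference is one of detail: the paper cites this model-independence as known folklore and stops, whereas you sketch its proof (comparison via a common monster extension, absoluteness of $\equiv_L$, equivariance and transitivity on each $p(\mon)/{\equiv_L}$), and that sketch is sound.
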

	\begin{proof}
		$X/{\equiv_L}$, $\Gal(T)$, $\bar E$ and the action of $\Gal(T)$ on $X/{\equiv_L}$ do not depend on the monster model, so the result is immediate from Proposition~\ref{prop:orb_to_gal}.
	\end{proof}
	
	(See also Question~\ref{qu:orbital_mt}.)

	\begin{lem}
		\label{lem:closed_cl_to_closed_witn}
		If $E=R_{\Gamma,\tilde X}$ is bounded invariant and either:
		\begin{itemize}
			\item
			for each $\tilde x\in \tilde X$, $[\tilde x]_E$ is type-definable, or
			\item
			$\Autf_{KP}(\fC)\leq \Gamma$
		\end{itemize}
		then $\tilde X':=\{x\in X\mid \exists \tilde x\in \tilde X \ \ x \equiv_{KP} \tilde x \}$ satisfies $E=R_{\Gamma,\tilde X'}$. (Note that if $\tilde X$ is type-definable, so is $\tilde X'$, and $\Autf_L(\fC)\cdot \tilde X'=\tilde X'$.)
	\end{lem}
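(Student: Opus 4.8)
The plan is to prove the two inclusions $E\subseteq R_{\Gamma,\tilde X'}$ and $R_{\Gamma,\tilde X'}\subseteq E$ separately. Since $x\KPeq x$ we have $\tilde X\subseteq \tilde X'$, so monotonicity of $R_{?,?}$ in its second argument (the remark following Remark~\ref{rem:alt_rsub}) immediately gives $E=R_{\Gamma,\tilde X}\subseteq R_{\Gamma,\tilde X'}$. For the reverse inclusion I would invoke the characterisation of Remark~\ref{rem:alt_rsub}: $R_{\Gamma,\tilde X'}$ is the \emph{finest} invariant relation $R$ such that $\tilde x'\Rr\gamma\tilde x'$ for all $\tilde x'\in\tilde X'$ and $\gamma\in\Gamma$. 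As $E$ is itself invariant, it therefore suffices to check that each of these generating pairs already lies in $E$, i.e.\ that $\tilde x'\Er\gamma\tilde x'$ for every $\tilde x'\in\tilde X'$ and $\gamma\in\Gamma$.

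So fix such $\tilde x'$ and $\gamma$, and choose $\tilde x\in\tilde X$ with $\tilde x'\KPeq\tilde x$, as permitted by the definition of $\tilde X'$. The crux of the whole argument is the single claim
\[
\tilde x'\Er\tilde x .
\]
Granting it, the conclusion follows by a short chain. Applying $\gamma\in\Aut(\mon)$ and using invariance of $E$ gives $\gamma\tilde x'\Er\gamma\tilde x$. On the other hand, since $\tilde x\in\tilde X$ and $\gamma\in\Gamma$, the relation $\tilde x\Er\gamma\tilde x$ is just one of the defining generators of $E=R_{\Gamma,\tilde X}$ (Remark~\ref{rem:alt_rsub}, with $h=\gamma$). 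Combining $\tilde x'\Er\tilde x\Er\gamma\tilde x\Er\gamma\tilde x'$ and using that $E$ is an equivalence relation yields $\tilde x'\Er\gamma\tilde x'$, as needed.

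It remains to establish the claim $\tilde x'\Er\tilde x$, and this is the one place where the two alternative hypotheses are used. First observe that, as $E$ is bounded and invariant, it is coarser than $\Laseq$, so every $E$-class — in particular $[\tilde x]_E$ — is $\equiv_L$-invariant. In the first case, $[\tilde x]_E$ is type-definable, whence by Fact~\ref{fct:basic_mt_obs}\eqref{it:fct:basic_mt_obs:inv_tdf} it is even $\equiv_{KP}$-invariant; since $\tilde x'\KPeq\tilde x\in[\tilde x]_E$, this forces $\tilde x'\in[\tilde x]_E$, i.e.\ $\tilde x'\Er\tilde x$. In the second case, $\Autf_{KP}(\mon)\leq\Gamma$, and $\tilde x'\KPeq\tilde x$ means $\tilde x'=\tau(\tilde x)$ for some $\tau\in\Autf_{KP}(\mon)\leq\Gamma$; then $\tilde x\Er\tau(\tilde x)=\tilde x'$ is again a defining generator of $E$ (Remark~\ref{rem:alt_rsub}, with $h=\tau$). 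Either way the claim holds, which completes the reverse inclusion and hence the equality $E=R_{\Gamma,\tilde X'}$.

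For the parenthetical assertions, note that $\tilde X'$ is by construction the $\equiv_{KP}$-saturation of $\tilde X$; since $\Autf_L(\mon)\leq\Autf_{KP}(\mon)$, such a set is automatically $\equiv_L$-invariant, giving $\Autf_L(\mon)\cdot\tilde X'=\tilde X'$. When $\tilde X$ is type-definable, type-definability of $\tilde X'$ follows from the standard fact that $\equiv_{KP}$-saturations of type-definable sets are type-definable (e.g.\ the image of a type-definable, hence compact, set in the compact Hausdorff quotient $X/{\equiv_{KP}}$ is closed, and $\tilde X'$ is its preimage). I expect the only genuinely delicate point to be the bookkeeping in the first case: recognising that $\equiv_L$-invariance of the $E$-class is precisely the hypothesis needed to feed into Fact~\ref{fct:basic_mt_obs}\eqref{it:fct:basic_mt_obs:inv_tdf} and thereby upgrade the $\equiv_{KP}$-equivalence of $\tilde x'$ and $\tilde x$ to full $E$-equivalence; the rest is formal manipulation of the defining generators of $R_{\Gamma,\tilde X}$.
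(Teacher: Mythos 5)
Your proof is correct, and it reaches the conclusion by a slightly different route than the paper. The paper's proof is a two-step reduction: first it uses Lemma~\ref{lem:worb_maximal} to reduce the first bullet to the second (the maximal witness group contains $\Autf_{KP}(\mon)$ once each $[\tilde x]_E$, being $\equiv_L$-invariant and type-definable, is $\equiv_{KP}$-invariant by Fact~\ref{fct:basic_mt_obs}\eqref{it:fct:basic_mt_obs:inv_tdf}); then, assuming $\Autf_{KP}(\mon)\leq\Gamma$, it sandwiches $\tilde X\subseteq\tilde X'\subseteq\tilde X_{\max}$, where $\tilde X_{\max}$ is the maximal witness set of Lemma~\ref{lem:worb_maximal} (which is $\Gamma$-invariant, hence $\equiv_{KP}$-saturated), and concludes by monotonicity. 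You instead prove the two inclusions directly, using for the nontrivial one the minimality characterisation of Remark~\ref{rem:alt_rsub} and checking that each generating pair $(\tilde x',\gamma\tilde x')$ lies in $E$, with the two bullets handled as parallel cases inside the claim $\tilde x'\Er\tilde x$. The ingredients are identical -- Fact~\ref{fct:basic_mt_obs}\eqref{it:fct:basic_mt_obs:inv_tdf} for the first bullet, the identification of $\equiv_{KP}$-classes with $\Autf_{KP}(\mon)$-orbits for the second, and monotonicity of $R_{?,?}$ -- so the mathematical content is the same; your version essentially unfolds Lemma~\ref{lem:worb_maximal} back into the generator argument from which it is proved. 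What your organisation buys is a small technical cleanliness: the paper's ``we can assume the second bullet'' silently replaces $\Gamma$ by the (possibly larger) maximal witness group, so strictly speaking it yields $E=R_{\Gamma',\tilde X'}$ and one needs an extra line of monotonicity ($E=R_{\Gamma,\tilde X}\subseteq R_{\Gamma,\tilde X'}\subseteq R_{\Gamma',\tilde X'}=E$) to recover the statement for the original $\Gamma$; your proof establishes $E=R_{\Gamma,\tilde X'}$ for the given $\Gamma$ directly in both cases. The paper's version, in exchange, is shorter because it reuses already-established machinery.
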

	\begin{proof}
		By Lemma~\ref{lem:worb_maximal}\eqref{it:lem:worb_maximal_group} and Fact~\ref{fct:basic_mt_obs}\eqref{it:fct:basic_mt_obs:inv_tdf}, we can assume without loss of generality that $\Autf_{KP}(\fC)\leq \Gamma$. Then by Lemma~\ref{lem:worb_maximal}\eqref{it:lem:worb_maximal_set}, there is a $\Gamma$-invariant (and hence $\equiv_{KP}$-invariant) $\tilde X''\supseteq \tilde X$ such that $E=R_{\Gamma,\tilde X''}=R_{\Gamma,\tilde X}$; the conclusion follows.
%
	\end{proof}

	\subsection{Results in the case of the automorphism group action}
	The following is a weaker form of Theorem~\ref{thm:worb_aut}, but it does not require the machinery of weakly orbital equivalence relations. It can be seen (in part) as a generalisation of the first case of Fact~\ref{fct:KPR_rem}.
	\begin{thm}
		\label{thm:orb_aut}
		Suppose $E$ is a bounded invariant, orbital equivalence relation on $X$. Then the following are equivalent:
		\begin{enumerate}
			\item
			\label{it:thm:orb_aut_Rtd}
			$E$ is type-definable,
			\item
			\label{it:thm:orb_aut_cltd}
			each $E$-class is type-definable,
			\item
			\label{it:thm:orb_aut_Grcls}
			$\Gamma_E$ is the preimage of a closed subgroup of $\Gal(T)$,
			\item
			\label{it:thm:orb_aut_fromclosed}
			$E=E_\Gamma$ for some $\Gamma$ which is the preimage of a closed subgroup of $\Gal(T)$,
			\item
			\label{it:thm:orb_aut_T2}
			$X/E$ is Hausdorff.
		\end{enumerate}
	\end{thm}
	\begin{proof}
		\eqref{it:thm:orb_aut_Rtd} and \eqref{it:thm:orb_aut_T2} are equivalent by Fact~\ref{fct:basic_mt_obs}\eqref{it:fct:basic_mt_obs:T_2}.
		
		\eqref{it:thm:orb_aut_Rtd} trivially implies \eqref{it:thm:orb_aut_cltd}.
		
		Because $\overline{\Gamma_E}=\bar\Gamma_{\bar E}$ is the intersection of stabilisers of classes of $\bar E$,  we obtain \eqref{it:thm:orb_aut_cltd}$\Rightarrow$\eqref{it:thm:orb_aut_Grcls} by Lemma~\ref{lem:agree_aut}\eqref{it:lem:agree_aut_mult} and Proposition~\ref{prop:orb_to_gal}.
		
		\eqref{it:thm:orb_aut_Grcls} trivially implies \eqref{it:thm:orb_aut_fromclosed} (because $E$ is orbital).
		
		If $\Gamma$ is as in \eqref{it:thm:orb_aut_fromclosed}, then by Proposition~\ref{prop:orb_to_gal}, $\bar E$ is the image of $(X/{\equiv_L})\times \bar \Gamma$ via the closed map from Lemma~\ref{lem:agree_aut}\eqref{it:lem:agree_aut_clsd}, so $\bar E$ is closed, and $E$ is type-definable.
	\end{proof}
	To express the full theorem, we will need the following technical definition.
	\begin{dfn}
		\label{dfn:wo/tdf}
		We say that an invariant equivalence relation $E$ is \emph{weakly orbital by type-definable} if there is a type-definable $\tilde X$ witnessing its weak orbitality.\xqed{\lozenge}
	\end{dfn}
	\begin{rem}
		\label{rem:wo/tdf}
		Note that orbital equivalence relations are weakly orbital by type-definable (via $\tilde X=X$), and on the other hand, if $X=p(\fC)$, all invariant equivalence relations are weakly orbital by type-definable (via $\tilde X= \{x_0\}$ for any $x_0\in X$).\xqed{\lozenge}
	\end{rem}
	
	We can now state the generalisation of Fact~\ref{fct:KPR_rem} promised in the introduction. More specifically, the equivalence of \eqref{it:thm:worb_aut:closed} and \eqref{it:thm:worb_aut:closedcl} below generalises it. On the other hand, the equivalence of \eqref{it:thm:worb_aut:closed} and \eqref{it:thm:worb_aut:closedgp} shows us that tameness of $E$ and tameness of witnesses to its weak orbitality are closely tied, which is reinforced in Corollary~\ref{cor:smt_aut}. (See also Remark~\ref{rem:witn_in_theorem}.)
	\begin{thm}
		\label{thm:worb_aut}
		Suppose $E$ is bounded invariant, weakly orbital equivalence relation on $X$. Then the following are equivalent:
		\begin{enumerate}
			\item
			\label{it:thm:worb_aut:closed}
			$E$ is type-definable,
			\item
			\label{it:thm:worb_aut:closedcl}
			each $E$-class is type-definable and $E$ is weakly orbital by type-definable,
			\item
			\label{it:thm:worb_aut:closedgp}
			$E=R_{\Gamma,\tilde X}$ for some type-definable $\tilde X$ and a group $\Gamma\leq \Aut(\fC)$ which is the preimage of a closed subgroup of $\Gal(T)$,
			\item
			\label{it:thm:worb_aut:T2}
			$X/E$ is Hausdorff.
		\end{enumerate}
	\end{thm}
	\begin{proof}
		Points \eqref{it:thm:worb_aut:closed} and \eqref{it:thm:worb_aut:T2} are equivalent by Fact~\ref{fct:basic_mt_obs}\eqref{it:fct:basic_mt_obs:T_2}.
		
		Note that by Proposition~\ref{prop:orb_to_gal}, $\bar E$ is weakly orbital if and only if $E$ is, and using Lemma~\ref{lem:closed_cl_to_closed_witn} each of the points \eqref{it:thm:worb_aut:closed}-\eqref{it:thm:worb_aut:closedgp} 
		is equivalent to its analogue for $\bar E$ (where we replace $\Aut(\fC)$ with $\Gal(T)$ and $X$ with $X/{\equiv_L}$, mapping each automorphism and point to its class, and we substitute ``closed'' for ``type-definable'' as appropriate). This allows us to freely pass back and forth between $\bar E$ and $E$.
		
		\eqref{it:thm:worb_aut:closed} clearly implies that all $\bar E$-classes are closed, and by weak orbitality of $\bar E$, we have some $\bar \Gamma\leq \Aut(\fC)$ and $\bar{\tilde X}\subseteq X$ such that $E=R_{\bar\Gamma,\bar{\tilde X}}$. But then by Lemma~\ref{lem:worb_maximal}, we can replace $\bar {\tilde X}$ with
		\[
			\bar{\tilde X}'=\{\bar x\in \bar X\mid (\forall \bar\gamma\in \bar\Gamma) \; \bar x \mathrel{\bar \Er} \bar\gamma(\bar x) \},
		\]
		which by Lemma~\ref{lem:agree_aut}\eqref{it:lem:agree_aut_orbit} is the intersection of a family of closed sets, and as such closed itself, which yields \eqref{it:thm:worb_aut:closedcl}.
		
		Similarly, to obtain \eqref{it:thm:worb_aut:closedgp} from \eqref{it:thm:worb_aut:closedcl}, apply Lemma~\ref{lem:worb_maximal} once again, this time replacing $\bar \Gamma$ with a $\bar \Gamma'$, and using Lemma~\ref{lem:agree_aut}\eqref{it:lem:agree_aut_mult}.
		
		Assume \eqref{it:thm:worb_aut:closedgp}. By Lemma~\ref{lem:closed_cl_to_closed_witn}, we obtain closed $\bar{\tilde X}$ and $\bar \Gamma\leq \Gal(T)$ such that $\bar E=R_{\bar{\tilde X},\bar{\Gamma}}$. Now note that while $E_{\bar \Gamma}$ may not be invariant, it is a closed equivalence relation (by Lemma~\ref{lem:agree_aut}\eqref{it:lem:agree_aut_clsd}), and we have
		\[
			\bar x_1 \Rr_{\bar{\tilde X},\bar{\Gamma}}\bar x_2\iff \exists\bar\sigma(\bar{\sigma}(\bar x_1)\Er_{\bar \Gamma}\bar{\sigma}(\bar x_2)\land \bar{\sigma}(\bar{x}_1)\in\bar{\tilde X}).
		\]
		It follows from Lemma~\ref{lem:agree_aut}\eqref{it:lem:agree_aut_mult}, \eqref{it:lem:agree_aut_diag} that the expression under $\exists\bar{\sigma}$ defines a closed set. By the definition of the logic topology (and because projections preserve type-definability), projection preserves closedness, which yields \eqref{it:thm:worb_aut:closed}.
%
	\end{proof}
	
	\begin{rem}
		\label{rem:witn_in_theorem}
		It actually follows from the proof of Theorem~\ref{thm:worb_aut} that we can add to the statement a counterpart of Theorem~\ref{thm:orb_aut}\eqref{it:thm:orb_cpct:clsdGp}, namely that all maximal witnesses $\tilde X$, $\Gamma$ are type-definable and the preimage of a closed subgroup of $\Gal(T)$ (respectively). We also have analogous equivalent conditions in Theorem~\ref{thm:worb_def} as well as Theorem~\ref{thm:worb_cpct}.\xqed{\lozenge}
	\end{rem}
	
	The following corollary is the main result of this paper, and a formal statement behind the Main Theorem in the introduction (consult also Remark~\ref{rem:main_general} below to see how it relates to Fact~\ref{fct:KPR_main}).
	
	\begin{cor}
		\label{cor:smt_aut}
		Assume that the theory is countable. Suppose that $E$ is a bounded, invariant, countably supported equivalence relation on $X$. Assume in addition that $E$ is orbital or, more generally, weakly orbital by type-definable. Then the following are equivalent:
		\begin{enumerate}
			\item
			\label{it:cor:smt_aut:clsd}
			$E$ is type-definable,
			\item
			\label{it:cor:smt_aut:clses}
			each $E$-class is type-definable,
			\item
			\label{it:cor:smt_aut:smt}
			$E$ is smooth,
			\item
			\label{it:cor:smt_aut:T2}
			$X/E$ is Hausdorff,
			\item
			\label{it:cor:smt_aut:rest}
			for each complete $\emptyset$-type $p\vdash X$, the restriction $E\restr_{p(\fC)}$ is type-definable.
		\end{enumerate}
	\end{cor}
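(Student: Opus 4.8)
The plan is to prove the five-way equivalence by a cycle of implications paralleling the compact and type-definable cases, Corollaries~\ref{cor:smt_cpct} and~\ref{cor:smt_def}. The key observation tying the abstract machinery of Section~\ref{sec:aut} to this statement is that the $\Aut(\mon)$-orbits inside $X$ are exactly the sets $p(\mon)$ of realisations of complete $\emptyset$-types, by strong homogeneity of $\mon$; hence $E\restr_{G\cdot x}=E\restr_{p(\mon)}$ for $p=\tp(x/\emptyset)$, and condition~\eqref{it:cor:smt_aut:rest} is precisely the orbit-restriction clause of the \hyperref[mcor:smt]{Meta-corollary} with ``type-definable'' in place of ``pseudo-closed''. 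I would route the argument as \eqref{it:cor:smt_aut:clsd}$\Rightarrow$\eqref{it:cor:smt_aut:rest}$\Rightarrow$\eqref{it:cor:smt_aut:clses}$\Rightarrow$\eqref{it:cor:smt_aut:clsd}, splicing \eqref{it:cor:smt_aut:smt} and \eqref{it:cor:smt_aut:T2} into this loop.

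First I would dispatch the routine implications. For \eqref{it:cor:smt_aut:clsd}$\Rightarrow$\eqref{it:cor:smt_aut:rest}, a restriction of a type-definable relation to the type-definable orbit $p(\mon)$ is again type-definable. For \eqref{it:cor:smt_aut:rest}$\Rightarrow$\eqref{it:cor:smt_aut:clses}, I would use that $E$ refines $E_G$, so every $E$-class lies inside a single orbit $p(\mon)$ and is there just a section of $E\restr_{p(\mon)}$, hence type-definable. The equivalence \eqref{it:cor:smt_aut:clsd}$\Leftrightarrow$\eqref{it:cor:smt_aut:T2} is immediate from Fact~\ref{fct:basic_mt_obs}\eqref{it:fct:basic_mt_obs:T_2} (here $X$ is $\emptyset$-type-definable by the standing assumption of the section), and \eqref{it:cor:smt_aut:clsd}$\Rightarrow$\eqref{it:cor:smt_aut:smt} is Fact~\ref{fct:tdf_smt}, using that $E$ is bounded, countably supported, and the theory is countable.

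The two implications carrying the real content are \eqref{it:cor:smt_aut:clses}$\Rightarrow$\eqref{it:cor:smt_aut:clsd} and \eqref{it:cor:smt_aut:smt}$\Rightarrow$\eqref{it:cor:smt_aut:rest}. For the former I would simply invoke the relevant meta-theorem: Theorem~\ref{thm:orb_aut} when $E$ is orbital, and Theorem~\ref{thm:worb_aut} when $E$ is weakly orbital by type-definable, the standing ``weakly orbital by type-definable'' hypothesis being exactly what supplies the second clause of condition~\eqref{it:thm:worb_aut:closedcl} of that theorem. For \eqref{it:cor:smt_aut:smt}$\Rightarrow$\eqref{it:cor:smt_aut:rest} I would appeal to Fact~\ref{fct:KPR_main} with $Y=X$: on each single complete type $p$ that dichotomy says $E\restr_{p(\mon)}$ is either type-definable or not smooth, and since smoothness of $E$ passes to each restriction $E\restr_{p(\mon)}$, the second alternative is excluded and \eqref{it:cor:smt_aut:rest} follows. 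This step is the $\Aut(\mon)$-analogue of the role played by Lemma~\ref{lem:smt_transdef} in the proof of Corollary~\ref{cor:smt_def}.

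The bookkeeping above is routine; the substance is imported wholesale from the meta-theorems and from Fact~\ref{fct:KPR_main}, so the main obstacle I expect is the hereditary descriptive-set-theoretic point inside \eqref{it:cor:smt_aut:smt}$\Rightarrow$\eqref{it:cor:smt_aut:rest}. Smoothness of $E$ is defined through the relation $E^M$ on the type space $X_M$, and I must check that its restriction to the closed $E^M$-saturated piece $[p]_M\subseteq X_M$ (the types over $M$ extending $p$) remains smooth, so that Fact~\ref{fct:KPR_main} genuinely applies orbit-by-orbit; this uses that $[p]_M$ is Borel and saturated because $E$ refines $E_G$. The other point worth flagging is that the application of Theorem~\ref{thm:worb_aut} in the weakly orbital case really does require the ``by type-definable'' strengthening, not mere weak orbitality — exactly as in the definable case this renders the ``orbital or'' clause redundant but keeps the ``by type-definable'' clause essential.
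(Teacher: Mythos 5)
Your proposal is correct and follows essentially the same route as the paper's proof: the cycle \eqref{it:cor:smt_aut:clsd}$\Rightarrow$\eqref{it:cor:smt_aut:rest}$\Rightarrow$\eqref{it:cor:smt_aut:clses}$\Rightarrow$\eqref{it:cor:smt_aut:clsd} via Theorem~\ref{thm:orb_aut} or \ref{thm:worb_aut}, the equivalences with \eqref{it:cor:smt_aut:smt} and \eqref{it:cor:smt_aut:T2} via Facts~\ref{fct:tdf_smt} and \ref{fct:basic_mt_obs}\eqref{it:fct:basic_mt_obs:T_2}, and \eqref{it:cor:smt_aut:smt}$\Rightarrow$\eqref{it:cor:smt_aut:rest} via Fact~\ref{fct:KPR_main} with $Y=X$. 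The only difference is that you explicitly verify the point the paper treats as immediate -- that smoothness of $E$ passes to $E\restr_{p(\mon)}$ because $[p]_M$ is a closed (hence Polish, $E^M$-saturated) subspace of $X_M$ -- which is a harmless and correct elaboration.
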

	\begin{proof}
		Clearly, \eqref{it:cor:smt_aut:clsd} implies \eqref{it:cor:smt_aut:rest}, which implies \eqref{it:cor:smt_aut:clses}.
		
		By Theorem~\ref{thm:orb_aut} or \ref{thm:worb_aut}, \eqref{it:cor:smt_aut:clses} implies \eqref{it:cor:smt_aut:clsd}.
		
		\eqref{it:cor:smt_aut:clsd} implies \eqref{it:cor:smt_aut:smt} by Fact~\ref{fct:tdf_smt}, and it is equivalent to \eqref{it:cor:smt_aut:T2} by Fact~\ref{fct:basic_mt_obs}\eqref{it:fct:basic_mt_obs:T_2}.
		
		Finally, \eqref{it:cor:smt_aut:smt} implies that each restriction $E\restr_{p(\fC)}$ is smooth, which -- by Fact~\ref{fct:KPR_main} -- implies \eqref{it:cor:smt_aut:rest}.
	\end{proof}
	
	\begin{rem}
		\label{rem:main_general}
		In the context of Corollary~\ref{cor:smt_aut}, Fact~\ref{fct:KPR_main} implies that if $Y\subseteq X$ is a type-definable and $E$-invariant subset of $X$ such that $\Aut(\fC)\cdot Y=X$ and $E\restr Y$ is smooth, then the condition \eqref{it:cor:smt_aut:rest} from the corollary is satisfied.
		
		Using this and Remark~\ref{rem:wo/tdf}, we can see that Corollary~\ref{cor:smt_aut} extends Fact~\ref{fct:KPR_main}. (However, bear in mind that Fact~\ref{fct:KPR_main} is essential in the proof of Corollary~\ref{cor:smt_aut}: it merely hides the heavy topological dynamical machinery used in its proof, which we certainly do not circumvent here.)\xqed{\lozenge}
	\end{rem}

	\section{Type-definable group actions}
	\label{sec:def}
	In this section, $G$ is a type-definable group, $X$ is a type-definable set, while the action of $G$ on $X$ is also type-definable (in the sense that it has a type-definable graph), all in the monster model $\fC$. We will adapt the ideas from the preceding section to obtain parallel theorems in case of definable and type-definable group actions (thus generalising Fact~\ref{fct:KPR_main_group}, just like the previous section generalised Fact~\ref{fct:KPR_main}).
	\subsection{Preparatory lemmas in the case of type-definable group actions}
	
	\begin{prop}
		The analogue of Lemma~\ref{lem:agree_aut} is true for a type-definable group action. More precisely, if $G$ acts type-definably on $X$ (i.e.\ the graph of the action is type-definable), then:
		\begin{enumerate}
			\item
			the map $(g,x)\mapsto g\cdot x$ is pseudo-continuous (preimages of type-definable sets are type-definable),
			\item
			the map $(g,x_1,x_2)\mapsto (g\cdot x_1,g\cdot x_2)$ is pseudo-continuous,
			\item
			for each $g\in G$, the map $x\mapsto (x,g\cdot x)$ is pseudo-continuous,
			\item
			the map $(x,g)\mapsto (x,g\cdot x)$ is pseudo-closed (images of type-definable sets are type-definable).
		\end{enumerate}
	\end{prop}
	\begin{proof}
		Trivial.
	\end{proof}
	
	We need to slightly modify the statement of Fact~\ref{fct:KPR_main_group} to one which fits better in our context.
	\begin{lem}
		\label{lem:smt_transdef}
		Assume that the language is countable. Suppose we have a definable action of $G$ on $X$ (i.e.\ $G$ is a definable group and the action of $G$ on $X$ has a definable graph) which is transitive. Let $E$ be a bounded, $G$-invariant equivalence relation on $X$, which is also invariant over a countable set $A\subseteq \fC$ (i.e.\ $\Aut(\fC/A)$-invariant). Then $E$ is smooth if and only if it is type-definable.
	\end{lem}
	\begin{proof}		
		Choose any point $\tilde x\in X$. Then $H=\Stab_G(\{[\tilde x]_E \})$ is invariant over $A\cup \{\tilde x \}$. Consider the map $\Psi\colon G\to X$ defined by $g\mapsto g\cdot \tilde x$. Then $\Psi^{-1}[E]=R_{H,\{e\}}$ is a left-invariant equivalence relation on $G$: the relation of lying in the same left coset of $H$. Let us denote it by $E'$. It is easy to see that $E$ is type-definable if and only if $E'$ is.
		
		Moreover, $E$ is smooth if and only if $E'$ is. To see that, let $M$ be a countable model containing all the necessary parameters (i.e.\ $\tilde x$, $A$, the parameters in the definitions). Notice that because $\Psi$ is an $M$-definable surjection, it induces a continuous surjection $\Psi_M\colon G_M\to X_M$, where $G_M$ is the set of types over $M$ of elements of $G$ and similarly, $X_M$ are the types over $M$ of elements of $X$. A continuous surjection between compact Polish spaces always has a Borel section, so we can use $\Psi_M$ and its section to transport the Borel separating families (from the definition of smoothness) between $G_M$ and $X_M$.
		
		But we know from Fact~\ref{fct:KPR_main_group} that $E'$ is type-definable if and only if it is smooth, so the same is true for $E$.
	\end{proof}
	
	\begin{rem}
		In Lemma~\ref{lem:smt_transdef}, we only really need $G$ itself to be definable, and even that only because that is assumed in Fact~\ref{fct:KPR_main_group}. (See also Question~\ref{qu:tdf_group}.) \xqed{\lozenge}
	\end{rem}
	
	\subsection{Results in the case of type-definable group actions}

	\begin{thm}
		\label{thm:orb_def}
		Let $G$ be a type-definable group acting type-definably on a type-definable set $X$.
		
		Suppose $E$ is an orbital, $G$-invariant equivalence relation on $X$ with $G^{000}_A$-invariant classes (for some small set $A$). Then the following are equivalent:
		\begin{enumerate}
			\item
			$E$ is type-definable,
			\item
			\label{it:thm:orb_def:clsdcls}
			each $E$-class is type-definable,
			\item
			\label{it:thm:orb_def:clsdgp}
			$H_E$ is type-definable,
			\item
			there is a type-definable subgroup $H\leq G$ such that $E=E_H$.
		\end{enumerate}
		In addition, if $E$ is bounded (equivalently, if $X/G$ is small), then the conditions are equivalent to the statement that $X/E$ is Hausdorff with the logic topology.		
	\end{thm}
	\begin{proof}
		The proof is largely analogous to the proof of Theorem~\ref{thm:orb_aut}. For the most part it is easier, and the only additional difficulty lies in the implication \eqref{it:thm:orb_def:clsdcls}$\Rightarrow$\eqref{it:thm:orb_def:clsdgp}: we have that
		\[
			H_E=\bigcap_{x\in X}\{g\in G\mid gx\Er x \},
		\]
		and each of the sets intersected on the right hand side is type-definable, but the intersection is not small. However, by the assumption, each of the sets on the right hand side contains $G^{000}_A$, so we can replace the large intersection by one of size at most $[G:G^{000}_A]$.
	\end{proof}
	In this context, we have a definition parallel to Definition~\ref{dfn:wo/tdf}, and the analogue of Remark~\ref{rem:wo/tdf} clearly applies.
	\begin{dfn}
		We say that a $G$-invariant equivalence relation $E$ is \emph{weakly orbital by type-definable} if there is a type-definable $\tilde X$ witnessing its weak orbitality.
	\end{dfn}
	
	\begin{thm}
		\label{thm:worb_def}
		In context of Theorem~\ref{thm:orb_def}, if we assume that $E$ is only weakly orbital, then the following are equivalent:
		\begin{enumerate}
			\item
			\label{it:thm:worb_def:clsd}
			$E$ is type-definable,
			\item
			\label{it:thm:worb_def:clsdcls}
			each $E$-class is type-definable and $E$ is weakly orbital by type-definable,
			\item
			\label{it:thm:worb_def:clsdwitn}
			$E=R_{H,\tilde X}$ for some type-definable $H$ and $\tilde X$,
		\end{enumerate}
		In addition, if $E$ is bounded (equivalently, $X/G$ is bounded), then the conditions are equivalent to statement that $X/E$ is Hausdorff with logic topology.
	\end{thm}
	\begin{proof}
		We work as in the proof of Theorem~\ref{thm:worb_aut}. The main difference is that we do not need to pass to the Galois group, and (as in the proof of Theorem~\ref{thm:orb_def}) we need to be a little bit more careful when we apply Lemma~\ref{lem:worb_maximal}. For example, in proving
		\eqref{it:thm:worb_def:clsdcls} from \eqref{it:thm:worb_def:clsd}, at some point we take
		\[
			\tilde X'=\{x\in X\mid (\forall h\in H)\;x\Er hx  \}=\bigcap_{h\in H}\{x\in X\mid x\Er hx  \}.
		\]
		As before, this is a non-small intersection of type-definable sets, but we can instead intersect over $h$ representing cosets in $H/G^{000}_A$.
	\end{proof}
	
	\begin{cor}
		\label{cor:smt_def}
		Assume that the theory is countable. Suppose $G$ is a definable group acting definably on $X$, while $E$ is a bounded, $G$-invariant equivalence relation on $X$, which is also invariant over a countable set $A\subseteq \fC$. Assume in addition that $E$ is orbital or, more generally, weakly orbital by type-definable. Then the following are equivalent:
		\begin{enumerate}
			\item
			\label{it:cor:smt_def:clsd}
			$E$ is type-definable,
			\item
			\label{it:cor:smt_def:clses}
			each $E$-class is type-definable,
			\item
			\label{it:cor:smt_def:smt}
			$E$ is smooth,
			\item
			\label{it:cor:smt_def:T2}
			$X/E$ is Hausdorff,
			\item
			\label{it:cor:smt_def:rest}
			for each $x\in X$, the restriction $E\restr_{G\cdot x}$ is type-definable.
		\end{enumerate}
	\end{cor}
	\begin{proof}
		Clearly, \eqref{it:cor:smt_def:clsd} implies \eqref{it:cor:smt_def:rest}, which implies \eqref{it:cor:smt_def:clses}.
		
		By Theorem~\ref{thm:orb_def} or \ref{thm:worb_def}, \eqref{it:cor:smt_def:clses} implies \eqref{it:cor:smt_def:clsd} (note that the assumptions that $E$ is bounded and $A$-invariant imply together that $E$-classes are $G^{000}_A$-invariant).
		
		\eqref{it:cor:smt_def:clsd} implies \eqref{it:cor:smt_def:smt} by Fact~\ref{fct:tdf_smt}, and it is equivalent to \eqref{it:cor:smt_def:T2} by Fact~\ref{fct:basic_mt_obs}\eqref{it:fct:basic_mt_obs:T_2}.
		
		Finally, \eqref{it:cor:smt_def:smt} implies that each restriction $E\restr_{G\cdot x}$ is smooth, which -- by Lemma~\ref{lem:smt_transdef} -- implies \eqref{it:cor:smt_def:rest}.
	\end{proof}
	
	\begin{rem}
		\label{rem:cor_gener}
		Corollary~\ref{cor:smt_def} can be slightly generalised: if we have a definable group action of $G$ on $X$, a type-definable group $K\leq G$, and some $K$-invariant $Y\subseteq X$, then Corollary~\ref{cor:smt_def} remains true if we replace $G$ and $X$ with $K$ and $Y$ (respectively), which is a direct generalisation of Fact~\ref{fct:KPR_main_group} (where we have $X=G$, $Y=K$ and $E=E_H$).
		
		It follows from an appropriate generalisation of Lemma~\ref{lem:smt_transdef} (with essentially the same proof).\xqed{\lozenge}
	\end{rem}
	
	\section{(Weakly) orbital equivalence relations for compact group actions}
	\label{sec:cpct}
	In this section, $G$ is a compact group acting continuously on $X$. We will prove a counterpart of the main theorem in this (purely topological) context.
%
%
%

	\subsection{Preparatory lemmas in the case of compact group actions}	
	
	\begin{lem}
		\label{lem:agree_proper}
		Let $G$ be a compact Hausdorff group acting continuously on a Hausdorff space $X$. Then we have analogous properties as in Lemma~\ref{lem:agree_aut}, i.e.\ :
		\begin{enumerate}
			\item
			\label{it:lem:agree_proper_mult}
			the map $(g,x)\mapsto g\cdot x$ is continuous,
			\item
			the map $(g,x_1,x_2)\mapsto (g\cdot x_1,g\cdot x_2)$ is continuous,
			\item
			\label{it:lem:agree_proper_orbit}
			for each $g\in G$, the map $x\mapsto (x,g\cdot x)$ is continuous,
			\item
			\label{it:lem:agree_proper_clsd}
			the map $(x,g)\mapsto (x,g\cdot x)$ is closed.
		\end{enumerate}
	\end{lem}
	\begin{proof}
		For \eqref{it:lem:agree_proper_clsd}, notice that the image of $A\subseteq X\times G$ is the projection along $G$ of $(A\times X)\cap \Gamma\subseteq X\times G\times X$, where $\Gamma$ is the graph of the group action, so if $A$ is closed, its image is closed by compactness of $G$. The remaining points are trivial by continuity.
	\end{proof}
	
	The following theorem of Miller was one of the main tools used in \cite{KPR15}, and can be seen as a precursor to Fact~\ref{fct:KPR_main}. We will apply it here to obtain an analogous (but much easier to prove) result in the topological case.
	\begin{fct}[{\cite[Theorem 1]{DM77}}]
		\label{fct:miller_thm}
		Suppose that $G$ is a totally nonmeagre [i.e.\ without any closed subsets meagre in themselves] group, and $H\leq G$. If there is a countable family $(E_i)_{i\in {\bf N}}$ of strictly Baire [i.e.\ whose trace on every subspace has the Baire property, relatively to the subspace topology] 
		sets separating right cosets of $H$ [i.e.\ every right coset is exactly the intersection of those $E_i$ which intersect it], then $H$ is closed.\xqed{\lozenge}
	\end{fct}
	
	For our applications, it is important that a compact group is always totally nonmeagre and Borel sets are always strictly Baire. Note also that we can switch right cosets for left cosets in Fact~\ref{fct:miller_thm} just by taking inverses.
	
	\begin{lem}
		\label{lem:smooth_to_closed}
		Suppose $G$ is a totally nonmeagre topological group acting continuously (or even in a Borel way) and transitively on a Polish space $X$. If $E$ is an invariant equivalence relation which is smooth (in the sense of Definition~\ref{dfn:smt}, i.e.\ it has a countable separating family of Borel sets), then for each $x_0\in X$, the stabiliser $\Stab_G\{[x_0]_E \}$ is closed in $G$.
	\end{lem}
	\begin{proof}
		Choose any $x_0\in X$. Put $H:=\Stab_G\{[x_0]_E \}$. We will use Fact~\ref{fct:miller_thm} to show that $H$ is closed.
		
		Consider the map $\Psi\colon G\to X$ defined by $g\mapsto g\cdot x_0$. By invariance of $E$ and the definition of $H$, we have $\Psi(g_1)\Er \Psi(g_2)$ if and only if $g_1[x_0]_E=g_2[x_0]_E$, i.e.\ $g_1H=g_2H$. This implies that the pullback of a countable separating family for $E$ is a countable separating family for left cosets of $H$ in $G$.
		
		Furthermore, since $\Psi$ is continuous, a pullback of a family of Borel sets will be a family of Borel, and hence strictly Baire sets. Together with the conclusion of the preceding paragraph and Fact~\ref{fct:miller_thm}, this implies that $H$ must be closed.		
	\end{proof}

	
	\subsection{Results in the case of compact group actions}	
	\begin{thm}
		\label{thm:orb_cpct}
		Suppose $G$ is a compact Hausdorff topological group acting continuously on a Hausdorff space $X$.
		
		The following are equivalent for an orbital invariant equivalence relation $E$ on $X$:
		\begin{enumerate}
			\item
			\label{it:thm:orb_cpct:clsd}
			$E$ is closed
			\item
			\label{it:thm:orb_cpct:clsdcls}
			each $E$-class is closed,
			\item
			\label{it:thm:orb_cpct:clsdGp}
			$H_E$ is closed,
			\item
			\label{it:thm:orb_cpct:clsdgp}
			$E=E_H$ for a closed subgroup $H\leq G$,
			\item
			\label{it:thm:orb_cpct:T_2}
			$X/E$ is Hausdorff.
		\end{enumerate}
	\end{thm}
	\begin{proof}
		Apart from \eqref{it:thm:orb_cpct:T_2}, we proceed similarly to the proof of Theorem~\ref{thm:orb_aut}, bearing in mind Lemma~\ref{lem:agree_proper}.
		
		\eqref{it:thm:orb_cpct:T_2}$\Rightarrow$\eqref{it:thm:orb_cpct:clsd} is trivial, while the implication from \eqref{it:thm:orb_cpct:clsdgp} to \eqref{it:thm:orb_cpct:T_2} follows from the fact that quotients of Hausdorff spaces by compact Hausdorff group actions are always Hausdorff (see \cite[Propositions 2, 3 in \S 4, Chapter 3]{NB66}).
	\end{proof}
	The following examples show that we cannot drop the assumption that $G$ is compact in Theorem~\ref{thm:orb_cpct}, even if $G$ is otherwise very tame.
	\begin{ex}
		Consider the action of $G={\bf R}$ on a two-dimensional torus $X={\bf R}^2/{\bf Z}^2$ by translations along a line with an irrational slope (e.g.\ $t\cdot [x_1,x_2]=[x_1+t,x_2+ t\sqrt 2]$). Then for $H=G$ the relation $E_G$ has dense (and not closed) orbits, so in particular, $X/E_G$ has trivial topology.\xqed{\lozenge}
	\end{ex}
	
	\begin{ex}
		Consider the action of $G={\bf R}$ on $X={\bf R}^2$ defined by the formula $t\cdot (x,y)= (x+ty,y)$. Then for $H=G$, the classes of $E_G$ are the singletons along the line $y=0$ and horizontal lines at $y\neq 0$, so they are closed, but $E_G$ is not closed.\xqed{\lozenge}
	\end{ex}

%
%
%

	We have a natural analogue of Definition~\ref{dfn:wo/tdf}, and again, the counterpart of Remark~\ref{rem:wo/tdf} applies.
	\begin{dfn}
		We say that a $G$-invariant equivalence relation $E$ is \emph{weakly orbital by closed} if there is a closed $\tilde X$ witnessing weak orbitality of $E$.\xqed{\lozenge}
	\end{dfn}

	\begin{thm}
		\label{thm:worb_cpct}
		Suppose $G$ is a compact Hausdorff group acting continuously on a Hausdorff space $X$.
		Then the following are equivalent for a weakly orbital equivalence relation $E$:
		\begin{enumerate}
			\item
			\label{it:thm:worb_cpct:closed}
			$E$ is closed,
			\item
			each $E$-class is closed and $E$ is weakly orbital by closed,
			\item
			\label{it:thm:worb_cpct:closedgp}
			$E=R_{H,\tilde X}$ for some closed $H$ and $\tilde X$,
		\end{enumerate}
	\end{thm}
	\begin{proof}
%
		We proceed as in the proof of Theorem~\ref{thm:worb_aut} (skipping the parts where we pass to the Galois group), bearing in mind Lemma~\ref{lem:agree_proper}. In case of \eqref{it:thm:worb_cpct:closedgp}$\Rightarrow$\eqref{it:thm:worb_cpct:closed}, we use the fact that projections along compact sets are closed.
	\end{proof}
	
	\begin{rem}
		A counterpart of Theorem~\ref{thm:orb_cpct}\eqref{it:thm:orb_cpct:T_2} (i.e.\ Hausdorffness of the quotient space) is missing in Theorem~\ref{thm:worb_cpct}. It can be shown under some additional assumptions -- e.g.\ if $E$ has a closed transversal -- but in general the proof seems elusive.\xqed{\lozenge}
	\end{rem}

	\begin{cor}
		\label{cor:smt_cpct}
		Suppose that $G$ is a compact Hausdorff group acting on a Polish space $X$. Suppose that $E$ is an invariant equivalence relation on $X$ which is orbital or, more generally, weakly orbital by closed.
		Then the following are equivalent:
		\begin{enumerate}
			\item
			\label{it:cor:smt_cpct:clsd}
			$E$ is closed,
			\item
			\label{it:cor:smt_cpct:clses}
			each $E$-class is closed,
			\item
			\label{it:cor:smt_cpct:smt}
			$E$ is smooth,
			\item
			\label{it:cor:smt_cpct:rest}
			for each $x\in X$, the restriction $E\restr_{G\cdot x}$ is closed.
		\end{enumerate}
	\end{cor}
	\begin{proof}
		Clearly, \eqref{it:cor:smt_cpct:clsd} implies \eqref{it:cor:smt_cpct:rest}, which implies \eqref{it:cor:smt_cpct:clses}.
		
		By Theorem~\ref{thm:orb_cpct} or \ref{thm:worb_cpct}, \eqref{it:cor:smt_cpct:clses} implies \eqref{it:cor:smt_cpct:clsd}.
		
		By Fact~\ref{fct:clsd_smth}, \eqref{it:cor:smt_cpct:clsd} implies \eqref{it:cor:smt_cpct:smt}.
		
		Finally, \eqref{it:cor:smt_cpct:smt} implies that each restriction $E\restr_{G\cdot x}$ is smooth, which -- by Lemma~\ref{lem:smooth_to_closed} and compactness -- implies \eqref{it:cor:smt_cpct:clses}, since $[x]_E=\Stab_G\{[x]_E \}\cdot x$.
	\end{proof}

	\section{Further problems}
	\label{sec:problems}
	
	\subsection{(Weak) orbitality of unbounded equivalence relations}
	In Corollary~\ref{cor:mtprop}, we have deduced that being orbital or weakly orbital equivalence relation is absolute for a bounded invariant equivalence relation (i.e.\ it does not depend on the choice of the monster model). On the other hand, there are equivalence relations which are unbounded, but are still orbital in every monster model (the simplest example is just equality). Thus, one might wonder whether or not is (weak) orbitality, in general, a well-defined model-theoretic property. The main difficulty seems to be that in the unbounded case, we cannot quantify over automorphisms.
	\begin{qu}
		\label{qu:orbital_mt}
		Can \emph{unbounded} invariant equivalence relation be ``accidentally'' [weakly] orbital? In other words, are there monster models $\fC_1,\fC_2$ and an (unbounded) invariant equivalence relation $E$ such that $E(\fC_1)$ is [weakly] orbital, while $E(\fC_2)$ is not?\xqed{\lozenge}
	\end{qu}
	
	\subsection{Type-definable group actions}
	Except for Corollary~\ref{cor:smt_def} and Lemma~\ref{lem:smt_transdef}, the results of Section~\ref{sec:def} apply equally in the case of a type-definable (not necessarily definable) group action. The main reason we require definability there is that those two were based on Fact~\ref{fct:KPR_main_group} (which assumes definability of the group in its hypotheses), but otherwise, it seems like it should be possible to extend them to type-definable group actions.
	\begin{qu}
		\label{qu:tdf_group}
		Can Corollary~\ref{cor:smt_def} and Lemma~\ref{lem:smt_transdef} be extended to type-definable group actions?\xqed{\lozenge}
	\end{qu}

	\subsection{Further ties to descriptive set theory}
	In Corollaries~\ref{cor:smt_cpct}, \ref{cor:smt_def}, and \ref{cor:smt_aut}, we have tied information about the witnesses of weak orbitality of an equivalence relation to its descriptive-set-theoretic property: smoothness. One might wonder whether this can be used further, for example to determine the Borel cardinality of $X/E$ in the non-smooth case.
	\begin{qu}
		Can we extract any additional set-theoretical or topological information about an invariant equivalence relation $E$ from the witnesses to its weak orbitality?\xqed{\lozenge}
	\end{qu}

	\section*{Acknowledgements}
	I would like to thank my advisor, Krzysztof Krupiński, for his support and helpful feedback. I would also like to thank Ludomir Newelski for the post-seminar discussions, including his suggestions on how to improve the introductory part of this paper.
	
	\printbibliography
\end{document}